\newtheorem{theorem}{Theorem}[section]
\newtheorem{lemma}[theorem]{Lemma}
\newtheorem{corollary}[theorem]{Corollary}
\newtheorem{proposition}[theorem]{Proposition}
\newtheorem{example}[theorem]{Example}
\newtheorem{remark}[theorem]{Remark}
\newtheorem{definition}[theorem]{Definition}
\def\bit{\begin{itemize}}
\def\eit{\end{itemize}}
\def\bc{\begin{center}}
\def\ec{\end{center}}
\def\bthm{\begin{theorem}}
\def\ethm{\end{theorem}}
\def\bcor{\begin{corollary}}
\def\ecor{\end{corollary}}
\def\bprop{\begin{proposition}}
\def\eprop{\end{proposition}}
\def\blem{\begin{lemma}}
\def\elem{\end{lemma}}
\def\brem{\begin{remark}}
\def\erem{\end{remark}}
\def\bdes{\begin{description}}
\def\edes{\end{description}}
\def\beq{\begin{equation}}
\def\eeq{\end{equation}}
\def\ben{\begin{enumerate}}
\def\een{\end{enumerate}}
\def\beqar{\begin{eqnarray}}
\def\eeqar{\end{eqnarray}}
\def\beqarr{\begin{eqnarray*}}
\def\eeqarr{\end{eqnarray*}}
\def\EE{{\mathbb E}}
\def\PP{{\mathbb P}}
\def\cD{\mathcal{D}}
\def\part{\partial}
\def\d#1dt{\frac{d#1}{dt}}    
\begin{document}
\title{On a coupling of solutions to the interface SDE on a star graph}
\maketitle
\begin{center}
\renewcommand{\thefootnote}{(\arabic{footnote})}
  \scshape Hatem Hajri \footnote{Institut VEDECOM, 77 Rue des Chantiers, 78000 Versailles. Email: hatem.hajri@vedecom.fr}
  and Marc Arnaudon \footnote{Institut de Mathématiques de Bordeaux UMR 5251, 351, Cours de la Libération - F33405 TALENCE. Email:
marc.arnaudon@math.u-bordeaux.fr}\setcounter{footnote}{0}
\end{center}

\begin{abstract}

Inspired by Tsirelson proof of the non Brownian character of Walsh Brownian motion filtration on three or more rays, we prove some results on a particular coupling of solutions to the interface SDE on a star graph, recently introduced in \cite{MR2905755}. This coupling consists in two solutions which are independent given the driving Brownian motion. As a consequence, we deduce that if the star graph contains $3$ or more rays, the argument of the solution at a fixed time is independent of the driving Brownian motion.
\end{abstract}

\section{Introduction and main results}

A filtration $(\mathcal F_t)_t$ has the Brownian representation property (BRP) if there exists a Brownian motion $B$ such that every $(\mathcal F_t)_t$-martingale is a stochastic integral of $B$. In 1979 Yor posed the reverse problem, i.e whether a filtration having the BRP is necessarily Brownian \cite{MR155}. At the end of his paper \cite{MR509476}, Walsh suggested the study of a Markov process with state space
$$G=\bigcup_{j=1}^{N} E_j;\ E_j=\{r e^{i\theta_j} : r\ge 0\}$$
where $\theta_j$ are given angles. This process, called since then Walsh Brownian motion (WBM), behaves like a standard Brownian motion on
each ray; and at $0$ it makes excursions with probability $p_j$ on $E_j\setminus\{0\}$. Later on, a detailed study of WBM was given in \cite{MR1022917}. In particular, it was shown that WBM is a strong Markov process with Feller semigroup and that the natural filtration $(\mathcal F^Z_t)_t$ of a WBM $Z$ has the BRP with respect to the Brownian motion $B$ given by the martingale part of $|Z|$, the geodesic distance between $Z$ and $0$.

After nearly two decades a negative answer to Yor's question was finally given by Tsirelson \cite{MR1487755}. The result proved by Tsirelson is the following

\begin{theorem}\label{tr}
If $(\mathcal G_t)_t$ is a Brownian filtration, i.e a filtration generated by a finite or infinite family of independent standard Brownian motions, there does not exist any $(\mathcal G_t)_t$-WBM ($(\mathcal G_t)_t$-Markov process with semigroup $P$, the Feller semigroup of WBM) on a star graph with three or more rays.

\end{theorem}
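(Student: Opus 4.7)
The plan is to argue by contradiction: suppose $Z$ is a $(\mathcal{G}_t)_t$-WBM on the star graph $G$ with $N\geq 3$ rays, where $(\mathcal{G}_t)_t$ is generated by a family of independent Brownian motions. The first step is to extract the natural driving Brownian motion from $Z$. By the Itô--Tanaka formula applied ray-by-ray, the radial part $X_t=|Z_t|$ is a reflecting Brownian motion and its continuous martingale part $B_t$ is a standard $(\mathcal{G}_t)_t$-Brownian motion with $\langle B\rangle_t=t$; since $(\mathcal{G}_t)_t$ is Brownian, one can complete $B$ to an orthogonal family of generators of $(\mathcal{G}_t)_t$.

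The central objects are the \emph{spider martingales}: for any function $\phi:\{1,\ldots,N\}\to\RR$ satisfying the balance condition $\sum_{j=1}^N p_j\,\phi(j)=0$, the process
$$M^\phi_t\;=\;\phi(\theta_t)\,|Z_t|$$
is a continuous $(\mathcal{F}^Z_t)_t$-local martingale, hence a continuous $(\mathcal{G}_t)_t$-local martingale. Continuity holds because $\phi(\theta_t)$ is constant along each excursion of $|Z|$ and $|Z|$ vanishes at each ray change, while the balance condition cancels the local-time contribution at $0$ in the Itô expansion. For $N\geq 3$ the affine space of admissible $\phi$ has dimension $\geq 2$, so one obtains at least two linearly independent continuous spider martingales $M^{\phi_1},M^{\phi_2}$ whose joint law encodes the angular variable $\theta$.

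The hard part, and the core of Tsirelson's original argument which I would follow, is to show that such a two-dimensional spider structure cannot live in a Brownian filtration. The plan is to invoke an I-cosiness / predictable representation argument: representing every $(\mathcal{G}_t)_t$-martingale as a stochastic integral against the generating Brownian motions, one analyses how $M^{\phi_1}$ and $M^{\phi_2}$ accumulate bracket at the zero set $\{|Z|=0\}$ and compares this with the structure forced by the ray-transition probabilities $(p_j)$. Equivalently, I would build two copies $Z,Z'$ of the WBM with the same driving $B$ (exactly the coupling studied in this paper) and examine the pair $(\theta_t,\theta'_t)$: predictable representation forces an I-cosy coupling that eventually merges $\theta$ with $\theta'$, whereas the discrete symmetric choice among $N\geq 3$ rays at each new excursion creates persistent disagreement.

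The case $N=2$ escapes the obstruction because the admissible space of $\phi$ is one-dimensional and skew Brownian motion solves a genuine SDE driven by a single Brownian motion; the contradiction is genuinely combinatorial and only appears once three or more rays are available. The main technical obstacle in the plan is making precise the incompatibility between the cross-variation structure of $(M^{\phi_1},M^{\phi_2})$ at $\{|Z|=0\}$ and the Gaussian/Brownian structure of $(\mathcal{G}_t)_t$; this is where Tsirelson's machinery of noises (and the distinction between classical and ``black'' noise) is really doing the work, and I would expect to borrow that framework rather than to reprove it from scratch.
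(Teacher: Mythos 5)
The paper does not actually prove this statement: it is Tsirelson's theorem, quoted from \cite{MR1487755}, and the paper only records the shape of his argument (perturb the driver, $B^r=rB+\sqrt{1-r^2}B'$, show $\EE[d(Z^r_t,Z_t)]\to 0$ while also $\EE[d(Z^r_t,Z_t)]\ge c>0$ uniformly in $r$). Your proposal points in the same general direction --- spider martingales, the self-coupling $(Z,Z^r)$, the observation that $N\ge 3$ is what creates the obstruction --- but it is a plan rather than a proof, and the hole sits exactly where the content is. Two quantitative steps are missing and neither is supplied. First, the convergence $\EE[d(Z^r_t,Z_t)]\to 0$ as $r\to 1$: this is where the Brownian-filtration hypothesis actually enters, since adaptedness to a filtration generated by a family of Brownian motions $\beta$ makes $Z_t$ a measurable functional $F(\beta)$, and second quantization of the Ornstein--Uhlenbeck semigroup gives $F(\beta^r)\to F(\beta)$ in probability. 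You never use the hypothesis in this form; your remark about completing $B$ to a generating family of $(\mathcal G_t)_t$ is neither needed nor obviously available. Second, and harder, the uniform lower bound. The actual mechanism (reproduced in the present paper as Propositions \ref{pop} and \ref{reminder}) is: (a) if the drivers of $|Z|$ and $|Z^r|$ have cross-variation density at most $r<1$, then the local times $L(|Z|)$ and $L(|Z^r|)$ have essentially disjoint points of increase (Tsirelson's Lemma 4.12); (b) a Tanaka-type formula for $D_t=d(\overline{Z_t},\overline{Z^r_t})$ carries the increasing term $(N-2)\left(\int_0^t 1_{\{\overline{Z^r_s}\ne 0\}}\,dL^1_s+\int_0^t 1_{\{\overline{Z_s}\ne 0\}}\,dL^2_s\right)$, which by (a) is bounded below by a multiple of $\EE[R_t]$ for a reflected Brownian motion $R$, and this is strictly positive precisely when $N\ge 3$. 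Your phrase about ``persistent disagreement created by the discrete choice among rays'' is the right intuition for (b) but is not an estimate, and (a) appears nowhere in your sketch.

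A secondary point: the classical-versus-black-noise dichotomy you invoke is not what drives this proof. Non-cosiness (I-cosiness in the \'Emery--Schachermayer formulation) is indeed the right abstract frame, and Brownian filtrations are cosy, but all the work is in producing the uniform lower bound above; appealing to ``Tsirelson's machinery'' to prove Tsirelson's theorem is circular unless you state and prove the two lemmas it reduces to. Note that the present paper's Theorem \ref{main}(i) is essentially a re-derivation of step (b) in the interface-SDE setting, so the missing computation can be read off Section 2.2.
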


To prove Theorem \ref{tr}, Tsirelson performs a beautiful reasoning by contradiction. Suppose there exists a Brownian motion $B$ such $Z=F(B)$ is a WBM with $N\ge 3$ rays. Let $Z^r=F(B^r)$ where $B^r=rB+\sqrt{1-r^2} B'$ with $B'$ an independent copy of $B$. Then, it is shown that $\mathbb E[d(Z^r_t,Z_t)]$ converges to $0$. However, Tsirelson is able to prove that $\mathbb E[d(Z^r_t,Z_t)]>c>0$ with $c$ not depending on $r$.

In the present paper we are interested in a simple stochastic differential equation on $G$ whose solutions are WBMs. This SDE is the interface SDE introduced in \cite{MR2905755} and driven by an $N$ dimensional Brownian motion $W=(W^1,\cdots,W^N)$. While moving inside $E_i$, a solution to this equation follows $W^i$ so that the origin can be seen as an interface at the intersection of the half lines. For $N=2$, the interface SDE is identified with 
\begin{equation}\label{sde}
dX_t=1_{\{X_t>0\}} dW^1_t + 1_{\{X_t\le 0\}} dW^2_t
\end{equation}
Equation (\ref{sde}) has a unique strong solution \cite{MR18,MR000}. Not knowing Theorem \ref{tr}, one could have the intuition, that similarly to $N=2$, solutions are also strong ones for $N\ge 3$. The Theorem implies this cannot be the case. 

The main result proved in \cite{MR2905755} was the existence of a stochastic flow of mappings, unique in law and a Wiener stochastic flow \cite{MR2060298} which solve the interface SDE. The problem of finding the flows of kernels which ``interpolate''  between these two particular flows was left open in \cite{MR2905755}. The answer to this question needs a complete understanding of weak solutions of this equation.

The purpose of the present paper is to establish new results on weak solutions of the interface SDE in the case $N\ge 3$. These results are very different from the case $N=2$. Our proofs are largely inspired by Tsirelson proof of Theorem \ref{tr}.

\subsection{Notations}
\leavevmode\par
\vspace{0.2cm}

\noindent This paragraph contains the main notations and definitions which will be used throughout the paper.  

Let $(G,d)$ be a metric star graph with a finite set of rays $(E_i)_{1\le i\le N}$ and origin denoted by $0$. This means that $(G,d)$ is a metric space, $E_i\cap E_j=\{0\}$ for all $i\ne j$  and for each $i$, there is an isometry $e_i:[0,\infty[\to E_i$. We assume $d$ is the geodesic distance on $G$ in the sense that $d(x,y)=d(x,0)+d(0,y)$ if $x$ and $y$ do not belong to the same $E_i$. 

For any subset $A$ of $G$, we will use the notation $A^{\ast}$ for $A\setminus\{0\}$. Also, we define the function $\varepsilon:G^{\ast}\rightarrow\{1,\cdots,N\}$ by $\varepsilon(x)=i$ if $x\in E_i^{\ast}$.  

Let $C^2_b(G^*)$ denote the set of all continuous functions $f:G\to\mathbb{R}$ such that for all $i\in [1,N]$, $f\circ e_i$ is $C^2$ on $]0,\infty[$ with bounded first and second derivatives both with finite limits at $0+$.
For $x=e_i(r)\in G^*$, set $f'(x)=(f\circ e_i)'(r)$ and $f''(x)=(f\circ e_i)''(r)$.

Let $p_1,\cdots,p_N\in (0,1)$ such that $\sum_{i=1}^{N} p_i=1$ and define
$$\cD=\left\{f\in C^2_b(G^*) : \sum_{i=1}^{N} p_i(f\circ e_i)'(0+)=0\right\}.$$

For $f\in C^2_b(G^*)$, we will take the convention $f'(0)=\sum_{i=1}^{N} p_i(f\circ e_i)'(0+)$ and $f''(0)=\sum_{i=1}^{N} p_i(f\circ e_i)''(0+)$ so that $\cD$ can be written as $\cD=\left\{f\in C^2_b(G^*) : f'(0)=0\right\}$. We are now in position to recall the following

\begin{definition}\label{def}
A solution of the interface SDE $(I)$ on $G$ with initial condition $X_0=x$ is a
pair of processes $(X,W)$ defined on a filtered probability space
$(\Omega,\mathcal A,(\mathcal F_t)_t,\mathbb P)$ such that
\begin{itemize}
\item[(i)] $W=(W^1,\dots,W^N)$ is a standard $(\mathcal
F_t)$-Brownian motion in $\mathbb R^N$;
\item[(ii)] $X$ is an $(\mathcal F_t)$-adapted continuous process on $G$;
\item[(iii)] For all $f\in \mathcal{D}$,
\begin{equation}\label{kdd00}
f(X_t)=f(x)+\sum_{i=1}^N \int_{0}^{t}f'(X_s)1_{\{X_s\in
E_i\}}dW^i_s+\frac{1}{2}\int_{0}^{t} f''(X_s)ds
\end{equation}
\end{itemize}

\end{definition}
To emphasize on the filtration $(\mathcal F_t)_t$, we will sometimes say $(X,W)$ is an $(\mathcal F_t)_t$-solution. It has been proved in \cite{MR2905755} (Theorem 2.3) that for all $x\in G$, $(I)$ admits a solution $(X,W)$ with $X_0=x$, the law of $(X,W)$ is unique and $X$ is an $(\mathcal F_t)$- WBM on $G$. We will denote by $Q_x$ the law of a solution $(X,W)$ with $X_0=x$.

Tsirelson theorem \ref{tr} combined with Theorem 2.3 in \cite{MR2905755} show that $X$ is $\sigma(W)$-measurable if and only if $N\le 2$.

Let us give an intuitive description of solutions to the previous equation. Given a WBM $X$ started from $x$, we will denote from now on by $B^X$ the martingale part of $|X|-|x|$. Freidlin-Sheu formula \cite{MR1743769} says that for all $f\in C^2_b(G^*)$

\begin{equation}\label{fs}
f(X_t)=f(x)+\int_{0}^{t}f'(X_s) dB^X_s+\frac{1}{2}\int_{0}^{t} f''(X_s)ds + \sum_{i=1}^{N} p_i(f\circ e_i)'(0+) L_t(|X)|
\end{equation}

with $L_t(|X|)$ denoting the local time of $|X|$. Comparing (\ref{kdd00}) with (\ref{fs}), one gets
\begin{equation}\label{dedi}
B^X_t=\sum_{i=1}^{N} \int_{0}^{t} 1_{\{X_s\in E_i\}} dW^i_s
\end{equation}
Thus, while it moves inside $E_i$, $X$ follows the Brownian motion $W^i$ which shows that (\ref{kdd00}) extends (\ref{sde}) in a natural way. 
\vspace{0.1cm}

Let us now introduce the following

\begin{definition}
We say that $(X,Y,W)$ is a coupling of solutions to $(I)$ if $(X,W)$ and $(Y,W)$ satisfy Definition \ref{def} on the same filtered probability space $(\Omega,\mathcal A,(\mathcal F_t)_t,\mathbb P)$.
\end{definition}
A trivial coupling of solutions to $(I)$ is given by $(X,X,W)$ where $(X,W)$ solves $(I)$. This is also the law unique coupling of solutions to $(I)$ if $N\le 2$ as $\sigma(X)\subset \sigma(W)$ in this case. Let us now introduce another interesting coupling.
\begin{definition}

A coupling $(X,Y,W)$ of solutions to $(I)$ is called the Wiener coupling if $X$ and $Y$ are independent given $W$. 

\end{definition}

The existence of the Wiener coupling is easy to check. For this note there exists a law unique triplet $(X,Y,W)$ such that $(X,W)$ and $(Y,W)$ are distributed respectively as $Q_x$ and $Q_y$ and moreover $X$ and $Y$ are independent given $W$. It remains to check that $W$ is a standard $(\mathcal F_t)$-Brownian motion in $\mathbb R^N$ where $\mathcal F_t=\sigma(X_u, Y_u, W_u, u\le t)$. This holds from the conditional independence between $X$ and $Y$ given $W$ and the fact that $W$ is a Brownian motion with respect to the natural filtrations of $(X,W)$ and $(Y,W)$. The reason for choosing the name Wiener for this coupling will be justified in Section \ref{ff} in connection with stochastic flows.

%
%

\subsection{Main results}
\leavevmode\par
\vspace{0.2cm}
Given a WBM $X$ on $G$, we define the process $\overline{X}$ by 
$$\overline{X}_t = 1_{\{X_t\ne 0\}} \sum_{i=1}^{N} 1_{\{\varepsilon(X_t)=i\}}\times e_i\left(\frac{e_i^{-1}(X_t)}{Np_i}\right)$$
Note that $\overline{X}=X$ if $p_i=\frac{1}{N}$ for all $1\le i\le N$. Following the terminology used in \cite{MR1655299}, the process $\overline{X}$ is a spidermartingale (``martingale-araign\'ee''). In fact, for all $1\le i\le N$, define 
\begin{equation}\label{d}
\overline{X}^i_t=|\overline{X}_t|\ \ \text{if}\ \ \overline{X}_t\in E_i\ \text{and}\ \overline{X}^i_t=0\ \ \text{if not}
\end{equation}
Note that $\overline{X}^i_t=f^i(X_t)$, where $f^i(x)=\frac{|x|}{N p_i} 1_{\{x\in E_i\}}$. Applying Freidlin-Sheu formula (\ref{fs}) for $X$ and the function $f^i$ shows that
\begin{equation}\label{Maaa}
\overline{X}^i_t=\frac{1}{N p_i}\int_{0}^{t} 1_{\{X_s\in E_i\}} dB^X_s + \frac{1}{N} L_t(|X|)
\end{equation}
In particular, 
$\overline{X}^i_t-\overline{X}^j_t$ is a martingale for all $i,j\in[1,N]$. Proposition 5 in \cite{MR1655299} shows that $\overline{X}$ is a spidermartingale. 

Our main result in this paper is the following.
\begin{theorem}\label{main}
Assume $N\ge 3$. Let $(X,Y,W)$ be the Wiener coupling of solutions to $(I)$ with $X_0=Y_0=0$. Then 
\begin{itemize}
\item[(i)] $d(\overline{X}_t,\overline{Y}_t)-\frac{N-2}{N}(|\overline{X}_t|+|\overline{Y}_t|)$ is a martingale. In particular, 
$$\EE[d(\overline{X}_t,\overline{Y}_t)]=2\frac{N-2}{N} \sqrt{\frac{2t}{\pi}}.$$
\item[(ii)] Call $g^X_t$ and $g^Y_t$ the last zeroes before $t$ of $X$ and $Y$, then for all $t>0$, $\mathbb P(g^X_t = g^Y_t)=0$ and $\mathbb P(X_t = Y_t)=0$.
\item[(iii)] $\varepsilon(X_t)$ and $\varepsilon(Y_t)$ are independent for all $t>0$.
\end{itemize}
\end{theorem}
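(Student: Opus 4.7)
My plan proves the three items in turn: part (i) is the main stochastic-calculus computation, part (ii) exploits the conditional-independence structure of the Wiener coupling, and part (iii) extracts a Cauchy--Schwarz equality case from a ray-resolved refinement of (i). For (i) I begin with the geometric identity
$$d(\overline X_t,\overline Y_t)=\sum_{i=1}^{N}|\overline X^i_t-\overline Y^i_t|,$$
verified case by case on the rays occupied by $X_t$ and $Y_t$. Setting $U^i:=\overline X^i-\overline Y^i$, the decomposition \eqref{Maaa} realises each $U^i$ as a continuous semimartingale whose martingale part is $M^{X,i}-M^{Y,i}$ and whose bounded variation part is $\tfrac{1}{N}(L^X-L^Y)$. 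Applying Tanaka's formula to each $|U^i|$ and summing, the drift of $\sum_i|U^i_t|$ is
$$\tfrac{1}{N}\sum_{i=1}^{N}\int_{0}^{t}\mathrm{sgn}(U^i_s)\,d(L^X_s-L^Y_s)+\sum_{i=1}^{N}L^{U^i}_t.$$
At $s$ with $X_s=0$ and $Y_s\ne 0$ one has $\overline X^i_s=0$ and $U^i_s=-\overline Y^i_s$, hence $\mathrm{sgn}(U^i_s)=-\mathbf 1_{\{\varepsilon(Y_s)=i\}}$ and $\sum_i\mathrm{sgn}(U^i_s)=-1$, with the symmetric statement on $\{Y_s=0,X_s\ne 0\}$. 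After verifying that $\{X_s=Y_s=0\}$ is $dL^X$- and $dL^Y$-negligible (a technicality handled via Feller continuity of $s\mapsto\PP(Y_s=0\mid W)$ combined with conditional independence), the sign integral collapses to $-\tfrac{1}{N}(L^X_t+L^Y_t)$. For the local-time part I first prove the auxiliary identity $L^{\overline X^i}_t=\tfrac{1}{N}L^X_t$ by applying Tanaka to the non-negative process $\overline X^i$ via \eqref{Maaa}, and then use that $d\langle U^i\rangle$ is supported on $\{X\in E_i,Y\notin E_i\}\cup\{Y\in E_i,X\notin E_i\}$ to localise and obtain $L^{U^i}_t=\int_0^t\mathbf 1_{\{Y_s\notin E_i\}}\,dL^{\overline X^i}_s+\int_0^t\mathbf 1_{\{X_s\notin E_i\}}\,dL^{\overline Y^i}_s$. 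Summing and using $\sum_i\mathbf 1_{\{Y_s\notin E_i\}}=N-1$ on $\{Y_s\ne 0\}$ yields $\sum_iL^{U^i}_t=\tfrac{N-1}{N}(L^X_t+L^Y_t)$; adding the two contributions, the drift of $d(\overline X_t,\overline Y_t)$ is $\tfrac{N-2}{N}(L^X_t+L^Y_t)$, which coincides with the drift of $\tfrac{N-2}{N}(|\overline X_t|+|\overline Y_t|)$ because summing \eqref{Maaa} over $i$ shows $|\overline X|-L^X$ is a martingale. The expectation formula is then immediate from $\EE[L^X_t]=\EE[|X_t|]=\sqrt{2t/\pi}$.

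For (ii), the Wiener coupling gives $X$ and $Y$ conditionally independent given $W$ with a common conditional law $\mu^W_t$, so $\PP(X_t=Y_t)=\EE[\sum_{x\in G}\mu^W_t(\{x\})^2]$ vanishes as soon as $\mu^W_t$ is a.s.\ atomless. Atomlessness of $\mu^W_t$ is a property of the WBM itself; I plan to prove it by a Tsirelson-type argument: the existence of a $W$-measurable atom for $N\ge 3$ rays would contradict the ray symmetry underlying Theorem~\ref{tr}. The statement $\PP(g^X_t=g^Y_t)=0$ follows by the same recipe applied to the conditional law of the last-zero time $g^X_t$ given $W$, using the regenerative structure of reflected Brownian motion at the origin.

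For (iii), write $q_i:=\PP(\varepsilon(X_t)=i\mid W)$; conditional independence and exchangeability give $\PP(\varepsilon(X_t)=\varepsilon(Y_t))=\sum_i\EE[q_i^2]$ and $\EE[q_i]=p_i$, whence Cauchy--Schwarz yields $\sum_i\EE[q_i^2]\ge\sum_ip_i^2$ with equality if and only if every $q_i$ is a.s.\ constant, equivalently $\varepsilon(X_t)\perp W$, equivalently $\varepsilon(X_t)\perp\varepsilon(Y_t)$. Thus the matter reduces to the matching upper bound $\PP(\varepsilon(X_t)=\varepsilon(Y_t))\le\sum_ip_i^2$, which I plan to extract from a ray-resolved version of (i): tracking each $|U^i_t|$ individually rather than just its sum recovers $\EE[(|\overline X_t|\wedge|\overline Y_t|)\mathbf 1_{\{\varepsilon(X_t)=\varepsilon(Y_t)=i\}}]$ in closed form and, after comparison with reflected-BM densities, pins down $\PP(\varepsilon(X_t)=\varepsilon(Y_t)=i)=p_i^2$. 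The main obstacle is exactly this per-ray step: the aggregate identity in (i) alone yields only the Cauchy--Schwarz lower bound, and the matching equality case requires a refined ray-by-ray accounting of the local-time contributions, which I expect to exploit the conditional independence given $W$ in an essential way.
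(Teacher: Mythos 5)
There is a genuine gap, and it sits exactly where you wave it away. In part (i), identifying the drift of $D_t=d(\overline X_t,\overline Y_t)$ as $\frac{N-2}{N}(L^X_t+L^Y_t)$ requires two nontrivial facts: that the local time of $D$ at $0$ vanishes, and that the common zeros of $X$ and $Y$ do not charge the local times, i.e.\ $\int_0^t 1_{\{Y_s=0\}}\,dL_s(|X|)=0$. You call the second fact ``a technicality handled via Feller continuity of $s\mapsto\PP(Y_s=0\mid W)$ combined with conditional independence,'' but no such soft argument can work: it makes no use of $N\ge 3$, and for $N=2$ the Wiener coupling is the trivial coupling $X=Y$ (since $X$ is then $\sigma(W)$-measurable), where $\int_0^t 1_{\{Y_s=0\}}\,dL_s(|X|)=L_t(|X|)\ne 0$ and the conclusion of (i) fails. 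A Fubini-type argument is unavailable because $dL_s(|X|)$ is singular with respect to $ds$, so the a.s.\ vanishing of $\PP(Y_s=0\mid W)$ for each fixed $s$ says nothing. This point is the entire content of the theorem, and the paper spends most of its length on it: it introduces perturbed couplings $(X^r,Y^r)$ with $d\langle B^{X^r},B^{Y^r}\rangle_t\le r\,dt$, invokes Tsirelson's lemma that such processes' local times have negligible common support, passes to the limit $r\to 1$ in law and in expectation, and compares the resulting lower bound $\EE[D_t]\ge 2\frac{N-2}{N}\EE[R_t]$ with an exact balayage-formula expression for $\EE[D_t]$ in the Wiener coupling; the forced equality yields $X_{g^Y_t}\ne 0$ a.s., which is what makes the drift collapse. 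Separately, the vanishing of $L^0_t(D)$ (Proposition 2.1 of the paper, proved via an occupation-density estimate) is also needed and is absent from your Tanaka bookkeeping, whose per-ray identities for $L^{U^i}$ again presuppose the negligibility you have not established.

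Parts (ii) and (iii) are likewise reduced to statements you do not prove. For (ii), atomlessness of the conditional law of $X_t$ given $W$ is asserted with a promissory ``Tsirelson-type argument''; in the paper, (ii) is not proved this way at all but falls out of the equality case above ($X_{g^Y_t}\ne 0$ gives $g^X_t\ne g^Y_t$, hence $X_t\ne Y_t$ since both follow the same $W^i$ on a common ray after a common last zero). For (iii), your plan hinges on showing $\PP(\varepsilon(X_t)=\varepsilon(Y_t)=i)=p_i^2$ by a ``ray-resolved'' refinement of (i); that equality \emph{is} statement (iii) restricted to the diagonal, and you acknowledge it as the main obstacle. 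The paper instead derives (iii) from (ii) together with the Barlow--\'Emery--Knight--Song--Yor theory of the $\sigma$-fields $\mathcal G_{g^1}$ and $\mathcal G_{g^1+}$ at the last zeros: $\varepsilon(X_t)$ is independent of $\mathcal G_{g^1}$ and $\mathcal G_{g^1+}$-measurable, and splitting on $\{g^1<g^2\}$ versus $\{g^2<g^1\}$ (a.s.\ exhaustive by (ii)) yields the product formula. Your Cauchy--Schwarz reduction is correct and mirrors the paper's Corollary 1.7, but it only converts the problem; it does not solve it.
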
 

Another important fact about $(X,Y,W)$ proved in \cite{MR2905755}, also true for $N=2$, says that $(X,Y,W)$ is a strong Markov process associated with a Feller semigroup. This result will be sketched in Section \ref{ff} below. 

 The claim (ii) says that common zeros of $X$ and $Y$ are rare. It has been proved in \cite{MR2905755}, that couplings $(X,Y)$ to $(I)$ have the same law before $T=\inf\{t\ge 0 : X_t=Y_t\}$ and that $T<\infty$ with probability one. The strong Markov property shows then that the set of common zeros of $X$ and $Y$ is infinite.

\textbf{The case $N=2$.} Point (i) in Theorem \ref{main} is also true for $N=2$ since $X=Y$ in this case \cite{MR2905755}. This can also be deduced from the proofs below. In fact, Proposition \ref{time} claims that $\Lambda_t$ defined as the local time of the semimartingale $d(\overline{X}_t,\overline{Y}_t)$ is zero for all $N\ge 2$. By the usual Tanaka formula (see also Proposition \ref{reminder}),
$$d(\overline{X}_t,\overline{Y}_t)=M_t+\frac{1}{2} \Lambda_t $$
where $M$ is a martingale. Taking the expectation shows that $\overline{X}=\overline{Y}$ and so $X=Y$. The same reasoning applies to any coupling $(X,Y)$ and in particular pathwise uniqueness holds for (\ref{kdd00}) in the case $N=2$. Since weak uniqueness is also satisfied, this yields the strong solvability of (\ref{kdd00}) when $N=2$ (or equivalently (\ref{sde})).  
\vspace{0.2cm}

Theorem \ref{main} yields the following important
\begin{corollary}\label{nice}
Assume $N\ge 3$. Let $(X,W)$ be a solution of $(I)$ with $X_0=0$. Then for each $t>0$, $\varepsilon(X_t)$ is independent of $W$.
\end{corollary}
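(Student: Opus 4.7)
The plan is to reduce the corollary to Theorem \ref{main}(iii) via a standard ``two conditionally i.i.d.~copies'' argument. First I would invoke the weak uniqueness of $(I)$ stated after Definition \ref{def} to identify $(X,W)$ in law with the first marginal of the Wiener coupling $(X,Y,W)$ with $X_0 = Y_0 = 0$. Since the claim ``$\varepsilon(X_t)$ is independent of $W$'' depends only on the joint law of $(X_t,W)$, it then suffices to prove the independence on the enriched probability space carrying the Wiener coupling.

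The core step is to introduce the regular conditional probabilities
$$\pi_i(W) \;:=\; \PP(\varepsilon(X_t) = i \mid W), \qquad i = 1,\dots,N.$$
Because $(X,W)$ and $(Y,W)$ both have law $Q_0$, the same measurable function of $W$ represents $\PP(\varepsilon(Y_t) = i \mid W)$. Using the defining conditional independence of $X$ and $Y$ given $W$, the joint conditional law factorizes as
$$\PP(\varepsilon(X_t)=i,\,\varepsilon(Y_t)=j\mid W) \;=\; \pi_i(W)\,\pi_j(W).$$
Taking expectations along the diagonal $i=j$ and invoking Theorem \ref{main}(iii) then yields the variance identity
$$\EE\bigl[\pi_i(W)^2\bigr] \;=\; \PP(\varepsilon(X_t)=i)^2 \;=\; \bigl(\EE[\pi_i(W)]\bigr)^2,$$
so $\pi_i(W)$ has zero variance, hence is almost surely equal to the constant $\PP(\varepsilon(X_t)=i)$ for every $i$. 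This is precisely the independence of $\varepsilon(X_t)$ from $W$.

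I do not expect a serious obstacle at this stage: granted Theorem \ref{main}(iii), the corollary is essentially the Cauchy--Schwarz equality case applied to the two ``conditionally i.i.d.'' copies supplied by the Wiener coupling. The only mildly delicate point is the transfer from an arbitrary solution $(X,W)$ to the canonical triple $(X,Y,W)$, but weak uniqueness of $(I)$ handles this immediately.
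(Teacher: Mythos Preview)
Your proof is correct and follows essentially the same argument as the paper: both introduce the conditional probabilities $\pi_i(W)=\PP(\varepsilon(X_t)=i\mid W)$, use the conditional independence in the Wiener coupling together with Theorem~\ref{main}(iii) to obtain $\EE[\pi_i(W)^2]=(\EE[\pi_i(W)])^2$, and conclude that $\pi_i(W)$ is a.s.\ constant. Your version is slightly more explicit about the reduction via weak uniqueness, but the substance is identical.
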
 
This corollary seems to us quite remarkable. In fact, admitting Tsirelson theorem \ref{tr} and using (\ref{dedi}), it can be deduced that $\varepsilon(X_t)$ is not $\sigma(W)$-measurable (actually neither $\varepsilon(X_t)$ nor $|X_t|$ are $\sigma(W)$-measurable). However, Corollary (\ref{nice}) gives a much stronger result than this non-measurability. Comparing this with the case $N=2$, in which $\epsilon(X_t)$ is $\sigma(W)$-measurable, shows that stochastic differential equations on star graphs with $N\ge 3$ rays involve interesting ``phase transitions''.

Corollary \ref{nice} is easy to deduce from Theorem \ref{main}. For this, define $C_t=\mathbb P(\varepsilon(X_t)=i|W)$. Since $X$ and $Y$ are independent given $W$ and $(X,W)$, $(Y,W)$ have the same law, 
$$\mathbb P\left(\varepsilon(X_t)=i\right)^2=\mathbb P(\varepsilon(X_t)=i, \varepsilon(Y_t)=i)=\EE[C_t^2]$$
Thus $\EE[C_t]=\EE[C_t^2]^{\frac{1}{2}}$ and so there exists a constant $c_t$ such that $C_t=c_t$ a.s. Taking the expectation shows that $c_t=p_i$. 

Let us now explain our arguments to prove Theorem \ref{main}. 

In Section \ref{s}, we prove that for any coupling $(X,Y,W)$ of solutions to $(I)$ such that $X_0=Y_0=0$, we have $L_t(D)=0$ where $D_t=d(\overline{X}_t,\overline{Y}_t)$. 

Next, inspired by Tsirelson arguments \cite{MR1487755}, we consider a perturbation $W^r=rW+\sqrt{1-r^2} \hat{W}, r<1$ of $W$, $\hat{W}$ is an independent copy of $W$, and $X^r, Y^r$ such that 
\begin{itemize}
\item $(X^r,W^r)$ and $(Y^r,W)$ are solutions to $(I)$.
\item $X^r$ and $Y^r$ are independent given $(W,\hat{W})$.

\end{itemize}
The coupling $(X^r,Y^r)$ satisfies
\begin{equation}\label{crucial}
\frac{d}{dt}\langle |X^r|,|Y^r|\rangle_t\le r<1
\end{equation}
A crucial result proved in \cite{MR1487755} (see also \cite{MR1655299,MR1655300}) which will be used below says that, since (\ref{crucial}) holds, $L_t(|X^r|)$ and $L_t(|Y^r|)$ have rare common points of increase (see (ii) in Proposition \ref{pop} for more precision). The process $(X^r,Y^r,W)$ is shown to converge in law as $r\rightarrow1$ to the Wiener coupling $(X,Y,W)$ described above. The passage to the limit $r\rightarrow1$ allows to deduce the properties mentioned in Theorem \ref{main}. 

Section \ref{ff} is a complement based on stochastic flows to the previous results. We consider the Wiener stochastic flow of kernels $K$ constructed in \cite{MR2905755} which is a strong solution to the flows of kernels version of (\ref{kdd00}) driven by a real white noise $(W_{s,t})_{s\le t}$. The Wiener coupling $(X,Y,W)$ is shown to be the strong Markov process associated to a Feller semigroup $Q$ obtained from $K$. 
%

\section{Proofs}

\subsection{The local time of the distance}\label{s}
\leavevmode\par
\vspace{0.2cm}
The subject of this paragraph is to prove the following result which, in the case $N=2$, is proved in \cite{MR2905755}.
\begin{proposition}\label{time} Assume $N\ge 2$. Let $(X,Y,W)$ be a coupling of two solutions to $(I)$ with $X_0=Y_0=0$ and let ${D}_t=d(\overline{X}_t,\overline{Y}_t)$. Then $D$ is a semimartingale with $L_t({D})=0$.

\end{proposition}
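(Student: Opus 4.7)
The plan is to establish the pointwise identity
\[
d(\overline X_t, \overline Y_t) = \sum_{i=1}^{N} \bigl|\overline X^i_t - \overline Y^i_t\bigr|,
\]
which follows from a short case analysis on $(X_t,Y_t)$ (both on a common ray; on distinct rays; at the origin). Writing $Z^i := \overline X^i - \overline Y^i$, each $Z^i$ is a continuous semimartingale by (\ref{Maaa}), so $D_t = \sum_{i=1}^N |Z^i_t|$ is a semimartingale.

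Applying Tanaka's formula to each $|Z^i|$ and substituting the decomposition
\[
dZ^i_t = dM^i_t + \frac{1}{N}\bigl(dL_t(|X|) - dL_t(|Y|)\bigr)
\]
coming from (\ref{Maaa}), where $M^i$ is a continuous martingale whose bracket is absolutely continuous (since $\langle B^X\rangle_t = \langle B^Y\rangle_t = t$), one obtains $D = D_0 + M + A$ with $M$ a continuous local martingale and
\[
A_t = \frac{1}{N}\int_0^t \Bigl(\sum_{i=1}^{N} \mathrm{sgn}(Z^i_s)\Bigr)\,d\bigl(L(|X|)-L(|Y|)\bigr)_s + \sum_{i=1}^{N} L^0_t(Z^i).
\]
Because $D\ge 0$, $D_0=0$, and $\int_0^t \mathbf{1}_{\{D_s=0\}}\,dM_s \equiv 0$ (its bracket $\int_0^t \mathbf{1}_{\{D_s=0\}}\,d\langle M\rangle_s$ vanishes by the occupation-times formula, since $\langle M\rangle$ is absolutely continuous), Tanaka's formula applied to $D = D^+$ reduces the claim $L^0_t(D)=0$ to showing
\[
\int_0^t \mathbf{1}_{\{D_s = 0\}}\,dA_s = 0.
\]

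On $\{D_s=0\}=\{X_s=Y_s\}$ I would treat the two pieces of $A$ separately. For the drift: on $\{X=Y=0\}$ every $\mathrm{sgn}(Z^i)$ vanishes, while on $\{X=Y\in E_j^*\}$ neither $L(|X|)$ nor $L(|Y|)$ grows; so this term is zero. For $\sum_i L^0(Z^i)$: the key observation is that if $X_{s_0}=Y_{s_0}\in E_j^*$, then continuity forces $X, Y$ to stay inside $E_j$ in a neighbourhood of $s_0$, and by (\ref{dedi}) they are driven there by the \emph{same} Brownian motion $W^j$, so $X\equiv Y$ on that neighbourhood. Hence $Z^i\equiv 0$ on an open neighbourhood of $s_0$ and no local time is generated. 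Consequently $\sum_i \int_0^t \mathbf{1}_{\{D_s=0\}}\,dL^0_s(Z^i)$ only sees the residual set $\{X_s=Y_s=0\}$.

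The main obstacle is to show that $L^0(Z^i)$ does not charge the Lebesgue-null set $\{X=Y=0\}$; local-time measures are singular with respect to Lebesgue, so Lebesgue nullity is not by itself enough. I would attack this via the approximation
\[
L^0_t(Z^i) = \lim_{\varepsilon \downarrow 0} \frac{1}{2\varepsilon} \int_0^t \mathbf{1}_{\{|Z^i_s|\le \varepsilon\}}\,d\langle Z^i\rangle_s,
\]
splitting the integration region according to which rays $X_s,Y_s$ occupy and estimating, on the part carried by the origin, the expected occupation time of $(|X_s|,|Y_s|)$ in $[0,\varepsilon]^2$; since $|X|,|Y|$ are reflected Brownian motions on $[0,\infty)$, one expects an $O(\varepsilon^2)$ bound which, divided by $\varepsilon$, still tends to zero. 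This estimate is the technical heart of the proof.
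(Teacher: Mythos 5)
Your architecture is sound up to the last step: the identity $D=\sum_{i=1}^N|Z^i|$ with $Z^i=\overline X^i-\overline Y^i$ is correct, each $Z^i$ is a semimartingale by (\ref{Maaa}) (so the Tanaka decomposition you write essentially re-derives Proposition \ref{reminder}), the stochastic integral $\int_0^t 1_{\{D_s=0\}}\,dM_s$ does vanish (its bracket is $\int_{\{0\}}L^a_t(D)\,da=0$ by the occupation formula applied to $D$), the drift term does not charge $\{D=0\}$ (with the symmetric $\mathrm{sgn}$ convention), and the set $\{X=Y\ne 0\}$ is indeed open with all $Z^i$ locally constant there, so the local times $L^0(Z^i)$ give it no mass. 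This correctly reduces the proposition to showing that $\sum_i L^0(Z^i)$ does not charge $\{s:X_s=Y_s=0\}$ --- but that reduction is essentially a reformulation of $L^0_t(D)=0$, not a simplification of it.

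The gap is exactly at your acknowledged ``technical heart'', and the estimate you propose there is false as stated. The bound $\EE\big[\int_0^t 1_{\{|X_s|\le\epsilon,\,|Y_s|\le\epsilon\}}ds\big]=O(\epsilon^2)$ tacitly multiplies the two marginal $O(\epsilon)$ bounds, i.e.\ treats $|X|$ and $|Y|$ as decorrelated near $0$. The proposition concerns an \emph{arbitrary} coupling, for which no such decorrelation holds: for the identity coupling $X=Y$ the joint occupation of $[0,\epsilon]^2$ is exactly of order $\epsilon$, and nothing a priori rules out strong positive association near the origin for other couplings. What actually saves the day is not smallness of the joint occupation but the quadratic variation: by (\ref{Maaa}) and (\ref{dedi}), on $\{X_s\in E_i,\ Y_s\in E_i\}$ both martingale parts are driven by the same $W^i$, so $d\langle Z^i\rangle_s=\frac{1}{(Np_i)^2}\,1_{\{\varepsilon(X_s)\ne\varepsilon(Y_s)\}}\big(1_{\{X_s\in E_i\}}+1_{\{Y_s\in E_i\}}\big)ds$ up to a Lebesgue-null set, and on $\{X_s\in E_i^*,\,Y_s\notin E_i\}$ one has $|Z^i_s|=|X_s|/(Np_i)$. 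Hence the quantity your approximation forces you to control is $\frac{1}{\epsilon}\int_0^t 1_{\{\varepsilon(X_s)\ne\varepsilon(Y_s),\,|X_s|\le C\epsilon\}}ds$ (localized near $\{|Y|\ \text{small}\}$), and making this small is precisely the hard analytic content of the paper's proof: there one shows $\int_0^t 1_{\{\varepsilon(X_s)\ne\varepsilon(Y_s)\}}\frac{ds}{|X_s|+|Y_s|}<\infty$, equivalently $\int_{]0,\infty]}L^a_t(D)\frac{da}{a}<\infty$, which forces $L^0_t(D)=\lim_{a\downarrow 0}L^a_t(D)=0$. Establishing that finiteness requires a genuinely nontrivial device --- folding the graph onto $\RR$, approximating $\mathrm{sgn}$ by $C^1$ functions uniformly bounded in total variation, and a uniform bound on $\EE[L^a_t(z^u)]$ for the interpolations $z^u=(1-u)x+uy$, whose brackets are uniformly non-degenerate on $\{y<0<x\}$. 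Your sketch supplies none of this, so the proof does not close; you would need to either import the paper's key integrability lemma or find a substitute for it.
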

\begin{proof}
The fact that $D$ is a semimartingale is shown in \cite{MR1487755} (see \cite{MR1655299} and Proposition \ref{reminder} below for more details). We follow the proof of Proposition 4.5 in \cite{MR2905755} and first prove that a.s.

\begin{equation}\label{ddd}
\int_{]0,+\infty]} L^a_t(D)\frac{da}{a}<\infty
\end{equation}

where $L^a_t(D)$ is the local time of $D$ at level $a$ and time $t$. 
Recall that by the occupation formula

$$\int_{]0,+\infty]} L^a_t(D)\frac{da}{a}=\int_{0}^{t}1_{\{D_s>0\}} \frac{d\langle D\rangle_s}{D_s}$$
By (\ref{Maaa}), 
\begin{eqnarray}
|\overline{X}_t|&=&\sum_{i=1}^{N} \overline{X}^i_t = M^1_t + L_t(|X|)\nonumber\\
|\overline{Y}_t|&=&\sum_{i=1}^{N} \overline{Y}^i_t = M^2_t + L_t(|Y|)\nonumber\
\end{eqnarray}
with
$$M^1_t = \sum_{i=1}^{N} \frac{1}{N p_i}\int_{0}^{t} 1_{\{X_s\in E_i\}} dB^X_s,\ M^2_t=\sum_{i=1}^{N} \frac{1}{N p_i}\int_{0}^{t} 1_{\{Y_s\in E_i\}} dB^Y_s$$
In particular, 
$$\langle M^1\rangle_t=\sum_{i=1}^{N} \frac{1}{(N p_i)^2}\int_{0}^{t} 1_{\{X_s\in E_i\}} ds,\ \langle M^2\rangle_t=\sum_{i=1}^{N} \frac{1}{(N p_i)^2}\int_{0}^{t} 1_{\{Y_s\in E_i\}} ds$$
and
$$\langle M^1,M^2\rangle_t = \sum_{i=1}^{N} \frac{1}{(N p_i)^2}\int_{0}^{t} 1_{\{X_s\in E_i,\ Y_s\in E_i\}} ds.$$
Proposition 7 \cite{MR1655299} tells us that 
\begin{eqnarray}
D_t&-&\int_{0}^{t} 1_{\{\varepsilon(X_s)\ne\varepsilon(Y_s)\}} (dM^1_s+dM^2_s) \nonumber\\
&-& \int_{0}^{t} 1_{\{\varepsilon(X_s)=\varepsilon(Y_s)\}} \text{sgn}(M^1_s - M^2_s) (dM^1_s-dM^2_s)\nonumber\
\end{eqnarray}

is a continuous increasing process. Consequently,
$$d\langle D\rangle_s = \sum_{i=1}^{N} \frac{1}{(N p_i)^2} 1_{\{\varepsilon(X_s)\ne\varepsilon(Y_s)\}} (1_{\{X_s\in E_i\}}+1_{\{Y_s\in E_i\}}) ds\le C 1_{\{\varepsilon(X_s)\ne\varepsilon(Y_s)\}} ds$$
where $C$ is a positive constant. Note there exists $C'>0$ such that $D_s\ge C'(|X_s|+|Y_s|)$ for all $s$ such that $\varepsilon(X_s)\ne\varepsilon(Y_s)$. Thus, to get (\ref{ddd}), it is sufficient to prove 
$$\int_{0}^{t}1_{\{X_s\ne 0,Y_s\ne 0\}}1_{\{\epsilon(X_s)\ne \epsilon(Y_s)\}} \frac{ds}{|X|_s+|Y_s|}<\infty$$
Let us prove for instance that
$$(1)=\int_{0}^{t}\frac{1}{|X_s| + |Y_s|}1_{\{X_s\in E_1^*, Y_s\notin E_1\}} ds<\infty$$
Define $f(z)=|z|$ if $z\in E_1$ and $f(z)=-|z|$ if not and set $x_t=f(X_t), y_t=f(Y_t)$. Clearly
$$\frac{1}{|X_s| + |Y_s|}1_{\{X_s\in E_1^*, Y_s\notin E_1\}}\,=\, \frac{1}{2}\frac{|\text{sgn}(x_s) - \text{sgn}(y_s)|}{|x_s - y_s|}1_{\{y_s< 0 < x_s\}}.$$
As in \cite{MR2905755}, let $(f_n)_n\subset C^1(\mathbb R)$ such that $f_n\rightarrow\text{sgn}$ pointwise and $(f_n)_n$ is
uniformly bounded in total variation. Defining $z^u_s=(1-u) x_s + u y_s$, we have by Fatou's Lemma
\begin{eqnarray}
(1)&\le& \liminf_n\int_{0}^{t}1_{\{y_s<0<x_s\}}\frac{|f_n(x_s)-f_n(y_s)|}{|x_s - y_s|} \frac{ds}{2}\nonumber\\
&\le &\liminf_n\int_{0}^{t} 1_{\{y_s<0<x_s\}} \int_{0}^{1}\big|f'_n(z^u_s)\big| du \frac{ds}{2}\nonumber
\end{eqnarray}
Writing Freidlin-Sheu formula for the function $f$ applied to $X$ and $Y$ shows that on $\{y_s<0<x_s\}$,
$$\frac{d}{ds}\langle z^u\rangle_s=u^2+(1-u)^2 \ge \frac{1}{2}$$
Thus
\begin{eqnarray*}(1) &\le&  \liminf_n \int_0^1 \int_{0}^{t} 1_{\{y_s<0<x_s\}} \big|f'_n(z^u_s)\big| d\langle z^u\rangle_s du\\
&\le& \liminf_n\int_{0}^{1}\int_{\mathbb R} \big|f'_n(a)\big| L^a_t(z^u)dadu
\end{eqnarray*}
So a sufficient condition for (1) to be finite is
$$\sup_{a\in\mathbb R,u\in[0,1]}\EE\big[L^a_t(z^u)\big]<\infty$$
By Tanaka's formula
\begin{eqnarray*}
\EE\big[L^a_t(z^u)\big]&=&
\EE\big[\big|z^u_t-a\big|\big]-\EE\big[\big|z^u_0-a\big|\big]-\EE\bigg[\int_{0}^{t}\text{sgn}(z^u_s-a)dz^u_s\bigg]\\
&\le& \EE[\big|z^u_t-z^u_0\big|] -\EE\bigg[\int_{0}^{t}\text{sgn}(z^u_s-a)dz^u_s\bigg]
\end{eqnarray*}
Since $x$ and $y$ are two skew Brownian motions, it is easily seen that $\sup_{u\in[0,1]} \EE[\big|z^u_t-z^u_0\big|]<\infty$. The same argument shows that $$\EE\bigg[\int_{0}^{t}\text{sgn}(z^u_s-a)dz^u_s\bigg]$$
is uniformly bounded with respect to $(u,a)$ and consequently (1) is finite. Finally $\int_{]0,+\infty]} L^a_t(D)\frac{da}{a}$ is finite a.s. Since $\lim_{a\downarrow 0} L^a(D)=L^0(D)$, we deduce $L^0_t(D)=0$. 
\end{proof}

\subsection{Proof of Theorem \ref{main}}
\leavevmode\par
\vspace{0.2cm}

This section gives the proof of our main result. First, we define the perturbation of the Wiener coupling as described in the introduction and then perform a passage to the limit. 
\begin{lemma}\label{lemm}
For all $r\in[0,1]$, there exists a law unique process $(X,W,\hat{W})$ such that, denoting $\mathcal F_t=\sigma(X_u,W_u,\hat{W}_u, u\le t)$,

\begin{itemize}
\item $W$ and $\hat{W}$ are two independent $(\mathcal F_t)_t$-Brownian motions in $\mathbb R^N$. 
\item $(X,W^r)$ is an $(\mathcal F_t)_t$-solution to $(I)$ with $X_0=0$ and where $W^r=r W +\sqrt{1-r^2} \hat{W}$.

\end{itemize}

\end{lemma}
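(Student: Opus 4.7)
The plan is to construct $(X, W, \hat W)$ by independent enlargement of filtration and to establish law-uniqueness via a combined martingale problem argument.

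For existence, I would start from a weak solution $(X, W^r)$ of $(I)$ with $X_0 = 0$, which exists on some filtered probability space by Theorem 2.3 of \cite{MR2905755}, and enlarge the space with an independent $N$-dimensional standard Brownian motion $B$. Setting $W := rW^r + \sqrt{1-r^2}\, B$ and $\hat W := \sqrt{1-r^2}\, W^r - r\, B$, the transformation $(W^r, B)\mapsto (W, \hat W)$ is orthogonal (acting componentwise on $\mathbb R^N$), so $(W, \hat W)$ is a $2N$-dimensional Brownian motion and a direct computation gives back $W^r = rW + \sqrt{1-r^2}\,\hat W$. Standard independent-enlargement arguments then ensure that $(X, W^r)$ remains an $\mathcal F_t$-solution of $(I)$ in the enlarged filtration $\mathcal F_t := \sigma(X_u, W_u, \hat W_u, u\le t)$, and that $W$ and $\hat W$ are independent $\mathcal F_t$-Brownian motions.

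For law-uniqueness, I would start from any triplet $(X, W, \hat W)$ satisfying the hypotheses and perform the inverse orthogonal transformation $W^r := rW + \sqrt{1-r^2}\,\hat W$, $B := \sqrt{1-r^2}\, W - r\, \hat W$. L\'evy's characterization then shows that $(W^r, B)$ is a $2N$-dim $\mathcal F_t$-Brownian motion and in particular $\langle B, W^r\rangle \equiv 0$. Since $(W, \hat W)$ is recovered from $(W^r, B)$ by the inverse rotation, the law of $(X, W, \hat W)$ is determined by that of $(X, W^r, B)$. The $(X, W^r)$-marginal is $Q_0$ by Theorem 2.3 of \cite{MR2905755} and the $B$-marginal is the $N$-dim Wiener measure $\nu_N$, so it suffices to prove that $B$ is independent of $(X, W^r)$. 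I would apply It\^o's formula to product test functions $F(x, w, b) = f(x) h(w) g(b)$ with $f \in \mathcal D$ and $h, g \in C^2_b(\mathbb R^N)$, and use $\langle B, W^r\rangle = 0$ to show that the generator of the triple splits as
\[
LF(x,w,b)\;=\;g(b)\,L_{X,W^r}(fh)(x,w)+f(x)h(w)\,\tfrac12\Delta_b g(b),
\]
with no coupling in the $B$-coordinate. Combining the weak uniqueness of $(I)$ with the uniqueness of the Brownian motion martingale problem then forces the joint law of $(X, W^r, B)$ to factorize as $Q_0 \otimes \nu_N$.

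The hard part will be the rigorous passage from the product structure of the generator to the product structure of the joint law. The cleanest route is to condition on the $B$-trajectory and verify that the conditional law of $(X, W^r)$ still solves the martingale problem of $(I)$, whence it must coincide a.s. with $Q_0$ by weak uniqueness; but some care is needed since $G$ is a singular metric graph and the boundary condition at the origin must be preserved under the conditioning.
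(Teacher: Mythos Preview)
Your existence argument is correct and is essentially the same orthogonal-rotation idea as in the paper, though you start directly from a weak solution $(X,W^r)$ whereas the paper begins with a bare WBM $X$ together with $2N$ extra independent Brownian motions and builds $\Gamma^i$, $W^i$, $\hat W^i$ from scratch. Your route is slightly shorter; the paper's route has the advantage that it makes the uniqueness argument below immediate.

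For uniqueness the two approaches genuinely differ. The paper does not use a martingale-problem/conditioning argument. Instead, given $(X,W,\hat W)$ it adjoins one extra independent real Brownian motion $B$, sets $\Gamma^i = rW^i+\sqrt{1-r^2}\,\hat W^i$, $V^{i+N}=\sqrt{1-r^2}\,W^i-r\hat W^i$, and $dV^i = 1_{\{X\notin E_i\}}\,d\Gamma^i + 1_{\{X\in E_i\}}\,dB$. A covariance computation shows that $(V^1,\dots,V^{2N})$ are $(\mathcal G_t)$-Brownian motions orthogonal to $B^X$, and then Lemma 4.3 of \cite{MR2905755} (a black box stating, roughly, that a $(\mathcal G_t)$-WBM is independent of any $(\mathcal G_t)$-Brownian family orthogonal to $B^X$) gives that $X,V^1,\dots,V^{2N}$ are jointly independent. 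Since $(W,\hat W)$ is recovered from $(X,V^1,\dots,V^{2N})$ by the explicit existence construction, the law of $(X,W,\hat W)$ is determined. Your approach instead rotates to $(X,W^r,B)$ and argues that the product structure of the generator forces $B\perp(X,W^r)$; this is correct but, as you rightly flag, the passage from ``no cross-term in the generator'' to ``product law'' needs the conditioning step (regular conditional law of $(X,W^r)$ given $B$ still solves $(I)$, hence equals $Q_0$ a.s.). That step is standard but not entirely trivial on the graph $G$. The paper's route trades this technicality for the citation of Lemma 4.3; your route is more self-contained but effectively reproves a special case of that lemma.
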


\begin{proof}
The proof of this lemma is similar to that of Theorem 2.3 in \cite{MR2905755}. For the existence part, take independent processes $X,V^1,\cdots,V^N,\cdots,V^{2N}$
where $X$ is a WBM started from $0$ and each $V^i$ is a standard Brownian motion. Denote by $(\mathcal G_t)_t$ the natural filtration of $(X,V^1,\cdots,V^N,\cdots,V^{2N})$ and for $1\le i\le N$, define
$$d\Gamma^i_t=1_{\{X_t\in E_i\}} dB^X_t + 1_{\{X_t\notin E_i\}} dV^i_t,\ dW^i_t=r d\Gamma^i_t+\sqrt{1-r^2}dV^{i+N}_t$$
and
$$d\hat{W}^i_t=\sqrt{1-r^2} d\Gamma^i_t - r dV^{i+N}_t.$$
Then $W=(W^1,\cdots,W^N), \hat{W}=(\hat{W}^1,\cdots,\hat{W}^N)$ are two independent $(\mathcal G_t)_t$-Brownian motions in $\mathbb R^N$, $(X,\Gamma^1,\cdots,\Gamma^N)$ is a $(\mathcal G_t)_t$-solution to $(I)$ and since $d\Gamma^i_t=r dW^i_t + \sqrt{1-r^2} d\hat{W}^i_t$, existence holds. 

Now let $(X,W,\hat{W})$ and $(\mathcal F_t)_t$ be as in the lemma. Introduce a Brownian motion $B$ independent of $(X,W,\hat{W})$ and define $\mathcal G_t=\mathcal F_t\vee \sigma(B_u, u\le t)$. Write $\hat{W}=(\hat{W}^1,\cdots,\hat{W}^N)$ and for $1\le i\le N$, define $$d\Gamma^i_t=r dW^i_t + \sqrt{1-r^2} d\hat{W}^i_t,\ \ \ dV^{i+N}_t=\sqrt{1-r^2} dW^i_t - r d\hat{W}^i_t$$
and
$$ dV^i_t=1_{\{X_t\notin E_i\}} d\Gamma^i_t + 1_{\{X_t\in E_i\}} dB_t.$$ 
Note that $V^1,\cdots,V^N,\cdots,V^{2N}$ are independent $(\mathcal G_t)_t$-Brownian motions. Using $1_{\{X_t\in E_i\}}dB^X_t=1_{\{X_t\in E_i\}} d\Gamma^i_t$, simple calculations show that $(V^1,\cdots,V^N,\cdots,V^{2N})$ is independent of $B^X$. Since $X$ is a $(\mathcal G_t)_t$-WBM, Lemma 4.3 in \cite{MR2905755} claims that $X,V^1,\cdots,V^N,\cdots,V^{2N}$ are independent. Finally $(X^+,W^+,\hat{W}^{+})$ constructed from $X,V^1,\cdots,V^N,\cdots,V^{2N}$ as in the existence part coincides with $(X,W,\hat{W})$. This finishes the proof. 
  
  
\end{proof}
An immediate consequence of the previous lemma is the following

\begin{lemma}
For all $r\in[0,1]$, there exists a law unique process $(X,Y,W,\hat{W})$ such that, denoting $\mathcal F_t=\sigma(X_u,Y_u,W_u,\hat{W}_u, u\le t)$,

\begin{itemize}
\item $W$ and $\hat{W}$ are two independent $(\mathcal F_t)_t$-Brownian motions in $\mathbb R^N$.
\item $(X,W^r)$ and $(Y,W)$ are two $(\mathcal F_t)_t$-solutions to $(I)$ with $X_0=Y_0=0$ and where $W^r=r W +\sqrt{1-r^2} \hat{W}$.
\item $X$ and $Y$ are independent given $(W,\hat{W})$.

\end{itemize}
\end{lemma}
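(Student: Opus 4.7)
The natural approach is to build $(X,Y,W,\hat{W})$ by gluing two marginal distributions along $(W,\hat{W})$ with a conditional-independence prescription. From the previous lemma, the joint law of $(X,W,\hat{W})$ is uniquely determined and yields a regular conditional distribution $\nu_{(w,\hat{w})}(dx)$ of $X$ given $(W,\hat{W})=(w,\hat{w})$. Separately, Theorem~2.3 of \cite{MR2905755} gives the uniqueness in law of a solution $(Y,W)$ to $(I)$ with $Y_0=0$; pairing $W$ with an independent $\mathbb{R}^N$-Brownian motion $\hat{W}$ yields a regular conditional distribution $\mu_{w}(dy)$ of $Y$ given $W=w$. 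Both prescriptions share the same marginal on $(W,\hat{W})$, namely two independent $\mathbb{R}^N$-Brownian motions, so on a product space I would take $(W,\hat{W})$ with this marginal law and, conditionally on $(W,\hat{W})$, sample $X$ and $Y$ independently with laws $\nu_{(W,\hat{W})}$ and $\mu_{W}$.

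Uniqueness in law is then immediate: the two specified marginals together with the conditional independence determine the joint distribution. For existence, the conditional independence and the two marginal laws are built into the construction, so the remaining task is to check that $W$ and $\hat{W}$ are $(\mathcal F_t)_t$-Brownian motions for $\mathcal F_t=\sigma(X_u,Y_u,W_u,\hat{W}_u,u\le t)$, and that the SDE identities defining $(X,W^r)$ and $(Y,W)$ as solutions to $(I)$ survive the enlargement from the natural filtrations of $(X,W,\hat{W})$ and $(Y,W)$ to $(\mathcal F_t)_t$. The key intermediate fact I would prove is that $\nu_{(w,\hat{w})}$ restricted to paths on $[0,t]$ depends on $(w,\hat{w})$ only through $(w,\hat{w})|_{[0,t]}$, and similarly for $\mu_{w}$. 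This follows from the previous lemma, in which $(W,\hat{W})$ is a Brownian motion in the natural filtration of $(X,W,\hat{W})$, so the post-$t$ increments of $(W,\hat{W})$ are independent of $X|_{[0,t]}$; the analogous statement for $(Y,W)$ is standard.

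Granted this time-restriction property, a direct computation conditioning on $(W,\hat{W})$ and using the independence of future $(W,\hat{W})$-increments from $(W,\hat{W})|_{[0,t]}$ shows that those future increments are also independent of $\mathcal F_t$, which gives the $(\mathcal F_t)_t$-Brownian property of $(W,\hat{W})$ and hence of $W^r = rW+\sqrt{1-r^2}\hat{W}$. The SDE identities from Lemma~\ref{lemm} and from the $(Y,W)$-marginal then transfer to $(\mathcal F_t)_t$ by the standard invariance of Itô integrals under a filtration enlargement in which the driving Brownian motion remains a Brownian motion and the integrands are already adapted. The main obstacle I anticipate is not conceptual but bookkeeping: writing the factorisation through $\nu_{(w,\hat{w})}|_{[0,t]}$ and $\mu_{w}|_{[0,t]}$ cleanly, and keeping careful track of which $\sigma$-fields are being conditioned on when invoking conditional independence.
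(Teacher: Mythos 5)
Your construction is correct and is essentially the argument the paper has in mind: it leaves this lemma as an exercise, noting it is proved exactly like the existence and law-uniqueness of the Wiener coupling, i.e.\ by gluing the two marginal laws along $(W,\hat W)$ with conditional independence and checking that the Brownian and SDE properties survive the filtration enlargement. The only detail worth making explicit in the uniqueness direction is that the hypotheses force $\hat W$ to be independent of $(Y,W)$ (apply the previous lemma with $r=1$ to the triple $(Y,W,\hat W)$), so that the conditional law of $Y$ given $(W,\hat W)$ really is your $\mu_W$.
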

The proof of this lemma is similar to the existence and law uniqueness of the Wiener coupling and is left as an exercise. 

In the sequel, we will denote $(X,Y,W,\hat{W})$ by $(X^r,Y^r,W,\hat{W})$ and use the notation $W^r$ to denote $r W +\sqrt{1-r^2} \hat{W}$.

\begin{proposition}\label{pop}
The following assertions hold  
\begin{itemize}
\item [(i)] $d\langle B^{X^r},B^{Y^r}\rangle_t= r 1_{\{\varepsilon(X^r_t)=\varepsilon(Y^r_t)\}} dt$.
\item[(ii)] $\int_0^t 1_{\{Y^r_s\ne 0\}} dL_s(|X^r|)=L_t(|X^r|)$ and $\int_0^t 1_{\{X^r_s\ne 0\}} dL_s(|Y^r|)=L_t(|Y^r|)$.

\end{itemize}

\end{proposition}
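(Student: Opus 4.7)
For (i), I would compute both driving Brownian motions explicitly using (\ref{dedi}). Since $(X^r,W^r)$ solves $(I)$ with $W^r=rW+\sqrt{1-r^2}\hat{W}$,
\[
B^{X^r}_t = r\sum_{i=1}^N \int_0^t 1_{\{X^r_s\in E_i\}}dW^i_s \;+\; \sqrt{1-r^2}\sum_{i=1}^N \int_0^t 1_{\{X^r_s\in E_i\}} d\hat{W}^i_s,
\]
while $B^{Y^r}_t=\sum_{j=1}^N \int_0^t 1_{\{Y^r_s\in E_j\}}dW^j_s$. Taking the cross-bracket, the mixed $\langle W^i,\hat{W}^j\rangle$ terms vanish by independence of $W$ and $\hat{W}$, and only the diagonal contributions $\langle W^i,W^j\rangle_t=\delta_{ij}t$ survive, producing $d\langle B^{X^r},B^{Y^r}\rangle_t = r\sum_i 1_{\{X^r_t\in E_i,\,Y^r_t\in E_i\}}dt$. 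Because a WBM spends zero Lebesgue time at $0$, this coincides $dt$-a.e.\ with $r\,1_{\{\varepsilon(X^r_t)=\varepsilon(Y^r_t)\}}dt$, which is the claimed identity.

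For (ii), I would first recast $|X^r|$ and $|Y^r|$ as genuine reflecting Brownian motions. Applying Freidlin--Sheu's formula (\ref{fs}) with $f(x)=|x|$, and noting that $(f\circ e_i)'(0+)=1$ for every $i$ while $\sum_i p_i=1$, one obtains $|X^r_t|=B^{X^r}_t+L_t(|X^r|)$ and analogously $|Y^r_t|=B^{Y^r}_t+L_t(|Y^r|)$. Combined with (i) this yields exactly the bound (\ref{crucial}): $\tfrac{d}{dt}\langle|X^r|,|Y^r|\rangle_t = r\,1_{\{\varepsilon(X^r_t)=\varepsilon(Y^r_t)\}}\le r<1$. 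I would then invoke the Tsirelson--type lemma from \cite{MR1487755} (see also \cite{MR1655299,MR1655300}): when two reflecting Brownian motions have driving martingales with cross-variation rate uniformly bounded above by some $r<1$, their local times at $0$ have no common points of increase, i.e.\ $dL_s(|X^r|)$ assigns no mass to $\{Y^r_s=0\}$, and symmetrically. The two identities in (ii) are then immediate restatements of this singularity.

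The main obstacle is matching the hypothesis of the Tsirelson lemma to the process at hand, and (i) is precisely what does this: it gives the exact uniform bound $r<1$ on the correlation rate of the two driving Brownian motions $B^{X^r}$ and $B^{Y^r}$. The computation in (i) itself is routine bilinear algebra on the stochastic integrals, while (ii) becomes a black-box application of a classical fact whose proof in the cited references proceeds via a Tanaka decomposition of $\bigl||X^r|-|Y^r|\bigr|$ exploiting the strict lower bound $\langle|X^r|-|Y^r|\rangle_t\ge 2(1-r)t$.
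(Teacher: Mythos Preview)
Your proposal is correct and follows essentially the same approach as the paper: for (i) you compute the bracket from the explicit stochastic integral representations (\ref{dedi}) of $B^{X^r}$ and $B^{Y^r}$, and for (ii) you invoke the same Tsirelson lemma (Lemma~4.12 in \cite{MR1487755}, also \cite{MR1655299,MR1655300}) after checking that (i) provides the required correlation bound $r<1$. The paper's proof is simply a two-line version of what you wrote, so there is nothing to add.
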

\begin{proof} 
Write $W^r=(W^{r,1},\cdots,W^{r,N})$. By the previous lemma $$dB^{X^r}_t=\sum_{i=1}^{N} 1_{\{X^r_t\in E_i\}} dW^{r,i}_t \ \text{and}\ dB^{Y^r}_t=\sum_{i=1}^{N} 1_{\{Y^r_t\in E_i\}} d W^{i}_t$$ which yields (i). (ii) is Lemma 4.12 in \cite{MR1487755} (see also \cite{MR1655299,MR1655300}).  
\end{proof}

The next lemma establishes the convergence in law of $(X^r,Y^r,W)$ to the Wiener coupling $(X,Y,W)$. 

\begin{lemma}\label{lem}
 As $r\to 1$, $(X^r,Y^r,W)$ converges in law to $(X,Y,W)$, the Wiener coupling of solutions to $(I)$ with $X_0=Y_0=0$. 
\end{lemma}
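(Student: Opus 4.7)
The plan is to combine tightness with the law-unique characterization of the Wiener coupling. The family $\{(X^r,Y^r,W,\hat W)\}_{r\in[0,1)}$ is tight because each marginal has a fixed law---WBM for $X^r$ and $Y^r$ by the uniqueness in law of solutions to $(I)$, and standard $\mathbb R^N$-Brownian motion for $W$ and $\hat W$---so tightness of the marginals gives tightness of the joint law on the product path space. Extract a subsequence $r_n\uparrow 1$ along which $(X^{r_n},Y^{r_n},W,\hat W)$ converges in law to some $(X^\ast,Y^\ast,W^\ast,\hat W^\ast)$; it will suffice to show that $(X^\ast,Y^\ast,W^\ast)$ is distributed as the Wiener coupling, because uniqueness of all subsequential limits will then upgrade subsequential convergence to full convergence.

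I would identify the limit by checking three properties. First, $(Y^\ast,W^\ast)\sim Q_0$ is immediate from $(Y^r,W)\sim Q_0$ for every $r$. Second, $(X^\ast,W^\ast)\sim Q_0$ follows from $(X^r,W^r)\sim Q_0$ combined with the fact that the functional $(x,y,w,\hat w)\mapsto(x,r_n w+\sqrt{1-r_n^2}\,\hat w)$ is continuous and converges uniformly on compacts to $(x,y,w,\hat w)\mapsto(x,w)$ as $n\to\infty$; a standard extension of the continuous mapping theorem gives $(X^{r_n},W^{r_n})\Rightarrow(X^\ast,W^\ast)$, whose limit must then have law $Q_0$. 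Third, $W^\ast$ and $\hat W^\ast$ are independent standard Brownian motions by preservation of joint marginals.

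The delicate step is $X^\ast\perp Y^\ast\mid W^\ast$. I would first strengthen the given $X^r\perp Y^r\mid(W,\hat W)$ to $X^r\perp Y^r\mid W$. From the natural construction of the coupling in the lemma preceding Proposition~\ref{pop}---sample $Y^r$ from the regular conditional law $\nu(W,\cdot)$ of $X$ given $W$ under $Q_0$, and, independently given $(W,\hat W)$, sample $X^r$ from $\nu(W^r,\cdot)$---one obtains $Y^r\perp\hat W\mid W$. A tower computation then yields
\begin{equation*}
\EE\!\left[f(X^r)g(Y^r)\mid W\right]=\EE\!\left[\EE[f(X^r)\mid W,\hat W]\,\EE[g(Y^r)\mid W]\bigm| W\right]=\EE[f(X^r)\mid W]\,\EE[g(Y^r)\mid W].
\end{equation*}
Equivalently, the law of $(X^r,Y^r,W)$ disintegrates as $\PP(W\in dw)\,\kappa_r(w,dx)\,\nu(w,dy)$ with $\kappa_r(w,\cdot)=\int\nu(rw+\sqrt{1-r^2}\,\hat w,\cdot)\,\gamma(d\hat w)$ and $\gamma$ Wiener measure on $\mathbb R^N$-path space. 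The product structure then passes to the limit provided $\kappa_r(W,\cdot)\to\nu(W,\cdot)$ in an appropriate sense as $r\uparrow 1$, producing the disintegration $\PP(W^\ast\in dw)\,\nu(w,dx)\,\nu(w,dy)$, which is exactly the Wiener coupling.

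The main obstacle will be this last convergence. Weak convergence alone does not automatically transfer conditional independence when the conditioning $\sigma$-algebra is infinite-dimensional, so one must exploit the Ornstein--Uhlenbeck structure of $W^r=rW+\sqrt{1-r^2}\,\hat W$: the associated Mehler semigroup on Wiener space is strongly continuous at $r=1$ on bounded measurable functionals, which delivers the convergence $\kappa_r(W,\cdot)\to\nu(W,\cdot)$ in $L^1(\PP)$. Once this is secured, the law-uniqueness of the Wiener coupling noted right after its definition in the introduction identifies $(X^\ast,Y^\ast,W^\ast)$ with $(X,Y,W)$, and the agreement of every subsequential limit forces the full convergence $(X^r,Y^r,W)\Rightarrow(X,Y,W)$ as $r\to 1$.
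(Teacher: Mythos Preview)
Your approach is essentially the same as the paper's. Both hinge on two facts: (a) the conditional law of $X^r$ given $(W,\hat W)$ depends only on $W^r$ (equivalently, $(X^r,W^r)\perp\overline{W}^r$ where $\overline{W}^r=\sqrt{1-r^2}\,W-r\hat W$), and (b) the convergence $\EE[F(W^r)\mid W]\to F(W)$ in probability as $r\uparrow1$ for bounded measurable $F$. What you call Mehler-semigroup strong continuity is exactly what the paper cites as the ``Slutsky lemma'' from \cite{MR1655299}.

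The paper's organization is more direct: it never extracts subsequences or invokes tightness, but simply computes
\[
\EE\Big[\prod_i f_i(X^{r_n}_{t_i}) g_i(Y^{r_n}_{t_i}) h_i(W_{t_i})\Big]
=\EE\Big[\prod_i \EE[f_i(X^{r_n}_{t_i})\mid W^{r_n}]\,\EE[g_i(Y^{r_n}_{t_i})\mid W]\,h_i(W_{t_i})\Big]
\]
using the same independence simplifications you derive, and then passes to the limit in each factor via (b). Your tightness-and-disintegration framing reaches the same destination but at the cost of an extra layer; on the other hand it makes the path-space weak convergence explicit, whereas the paper only verifies convergence of finite-dimensional distributions (which is all that is used downstream). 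One small point: the identity $\EE[f(X^r_t)\mid W,\hat W]=\EE[f(X^r_t)\mid W^r]$, which you pack into the phrase ``sample $X^r$ from $\nu(W^r,\cdot)$'', is justified in the paper via the law-uniqueness part of Lemma~\ref{lemm}; you should point to that uniqueness rather than to the ``natural construction''.
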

\begin{proof} Let $(r_n)_n$ be a sequence in $[0,1]$ such that $\lim_{n\to\infty}r_n=1$. For any $p\ge 1$, $(f_i,g_i,h_i)_{1\le i\le p}$ bounded, $(t_i)_{1\le i\le p}$

$$\EE\left[\prod_{i=1}^{p} f_i(X^{r_n}_{t_i}) g_i(Y^{r_n}_{t_i}) h_i(W_{t_i})\right]=\EE\left[\prod_{i=1}^{p} \EE[f_i(X^{r_n}_{t_i})|W,\hat{W}] \EE[g_i(Y^{r_n}_{t_i})|W,\hat{W}] h_i(W_{t_i})\right]$$
Note that $(Y^{r_n},W)$ is independent of $\hat{W}$. This can be deduced from the uniqueness part in Lemma \ref{lemm} by taking $r=1$. Consequently $\EE[g_i(Y^{r_n}_{t_i})|W,\hat{W}]=\EE[g_i(Y^{r_n}_{t_i})|W]$ a.s.
Now $\sigma(W,\hat{W})=\sigma(W^{r_n},\overline{W}^{r_n})$ where $\overline{W}^{r_n}$ is the independent complement to $W^{r_n}$ given by
$$\overline{W}^{r_n}=\sqrt{1-r_n^2} W - r_n \hat{W}.$$
By the proof of Lemma \ref{lemm}, $(X^{r_n},W^{r_n},\overline{W}^{r_n})$ and $(Y^{r_n},W,\hat{W})$ have the same law. Consequently $(X^{r_n},W^{r_n})$ is also independent of $\overline{W}^{r_n}$ and so $$\EE[f_i(X^{r_n}_{t_i})|W,\hat{W}]=\EE[f_i(X^{r_n}_{t_i})|W^{r_n},\overline{W}^{r_n}]=\EE[f_i(X^{r_n}_{t_i})|W^{r_n}]. $$
Slutsky lemma (see Theorem 1 in \cite{MR1655299}) shows that for all $f:G\rightarrow\mathbb R$ measurable bounded and $t>0$, as $n\rightarrow\infty$, 
$$\mathbb E[f(X^{r_n}_{t})|W^{r_n}]\longrightarrow Q_tf(W)$$
in probability where $Q_tf(W)=\int f(y) Q_t(W,dy)$ and $Q_t(W,dy)$ is a regular conditional expectation of $X_t$ given $W$. Finally  

\begin{eqnarray}
\lim_n \EE\left[\prod_{i=1}^{p} f_i(X^{r_n}_{t_i}) g_i(Y^{r_n}_{t_i}) h_i(W_{t_i})\right]&=&\EE\left[\prod_{i=1}^{p} Q_{t_i}f_i(W) Q_{t_i}g_i(W) h_i(W_{t_i})\right]\nonumber\\
&=& \EE\left[\prod_{i=1}^{p} f_i(X_{t_i}) g_i(Y_{t_i}) h_i(W_{t_i})\right]\nonumber\
\end{eqnarray}
and the lemma is proved. 

\end{proof}
Let us now recall Proposition 7 in \cite{MR1655299}. 
\begin{proposition}\label{reminder}
Let $Z^1$ and $Z^2$ be two WBMs with respect to the same filtration such that $Z^1_0=Z^2_0=0$. Denote by $\Lambda$ the local time of $D_t=d(\overline{Z^1_t},\overline{Z^2_t})$. Then 
$$D_t =M_t + \frac{1}{2} \Lambda_t + (N-2) \left(\int_0^t 1_{\{\overline{Z^1_s}\ne 0\}} dL^2_s + \int_0^t 1_{\{\overline{Z^2_s}\ne 0\}} dL^1_s\right)$$ 
with $M$ a martingale, $M_0=0$ and $L^1, L^2$ are (see Proposition 5 in \cite{MR1655299}) the bounded variation parts of $\overline{X}^i_t$ (defined by (\ref{d})) and $\overline{Y}^i_t$. 
\end{proposition}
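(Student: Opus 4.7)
The plan is to reduce $D_t$ to a sum of absolute values of real-valued semimartingales via the star-graph structure, apply classical Tanaka's formula componentwise, and then reassemble the pieces.

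I would first establish the elementary geometric identity
$$D_t = d(\overline{Z^1_t},\overline{Z^2_t}) = \sum_{i=1}^N \bigl|\overline{Z^1}^i_t - \overline{Z^2}^i_t\bigr|.$$
If $\overline{Z^1_t}$ and $\overline{Z^2_t}$ lie on different rays (or one of them is at the origin) only two indices contribute and the sum equals $|\overline{Z^1_t}|+|\overline{Z^2_t}|$; if they lie on the same ray $E_i$, only the $i$-th term survives and equals $\bigl||\overline{Z^1_t}|-|\overline{Z^2_t}|\bigr|$. In both cases the sum coincides with the geodesic distance.

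Next, by (\ref{Maaa}), each $\overline{Z^k}^i$ is a continuous semimartingale whose bounded variation part is \emph{the same for all $i$}:
$$\overline{Z^k}^i_t = M^{k,i}_t + L^k_t, \qquad L^k_t := \tfrac1N L_t(|Z^k|).$$
Setting $Y^i := \overline{Z^1}^i - \overline{Z^2}^i$ and applying the symmetric Tanaka formula to each $|Y^i|$, then summing in $i$, gives
$$D_t = M_t + \sum_{i=1}^N \lambda^i_t + \sum_{i=1}^N \int_0^t \text{sgn}(Y^i_s)\,d(L^1-L^2)_s,$$
where $M = \sum_i N^i$ is a martingale and $\lambda^i$ is the symmetric local time of $Y^i$ at $0$. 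The sign integrals are computed from the supports of $dL^k$: on $\{Z^1_s=0\}\supset\text{supp}(dL^1)$ every $\overline{Z^1}^i_s$ vanishes, hence $\text{sgn}(Y^i_s) = -1_{\{Z^2_s\in E_i^*\}}$, whose sum over $i$ equals $-1_{\{Z^2_s\ne 0\}}$; symmetrically for $dL^2$. This yields
$$D_t = M_t + \sum_{i=1}^N \lambda^i_t - \int_0^t 1_{\{Z^2_s\ne 0\}}\,dL^1_s - \int_0^t 1_{\{Z^1_s\ne 0\}}\,dL^2_s.$$

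The remaining and crucial step is to identify
$$\sum_{i=1}^N\lambda^i_t = \tfrac12\Lambda_t + (N-1)\Bigl(\int_0^t 1_{\{Z^1_s\ne 0\}}\,dL^2_s + \int_0^t 1_{\{Z^2_s\ne 0\}}\,dL^1_s\Bigr),$$
which, combined with the previous display, produces the coefficient $(N-1)-1 = N-2$ of the statement. This decomposition is the main obstacle. Heuristically, on $\{Z^1_s=0,\ Z^2_s\in E_j^*\}$ the index $i=j$ contributes no local time to $\lambda^j$ (since $Y^j = -\overline{Z^2}^j$ is locally bounded away from $0$), whereas for each of the $N-1$ remaining rays $i\ne j$ the excursions of $Z^1$ out of $0$ onto $E_i$ create one-sided excursions of $Y^i$ from $0$ and add to $\lambda^i$, producing the factor $N-1$; the remaining $\tfrac12\Lambda$ captures the genuine near-meetings of $\overline{Z^1}$ and $\overline{Z^2}$. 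The rigorous identification proceeds via the occupation-time formula applied to $\langle Y^i\rangle$, or by an excursion-theoretic argument as in \cite{MR1655299}, carefully tracking the pairs of ray-states contributing to each $\lambda^i$.
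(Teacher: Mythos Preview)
The paper does not actually prove Proposition~\ref{reminder}; it is introduced with ``Let us now recall Proposition~7 in \cite{MR1655299}'' and then applied. So there is no ``paper's own proof'' to compare against, and your proposal is in effect an attempted reconstruction of the argument of \cite{MR1655299}.

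Your reduction is correct and well-organised. The geometric identity $D_t=\sum_i|\overline{Z^1}^i_t-\overline{Z^2}^i_t|$ is easily checked case by case, and your computation of the sign--integrals against $dL^1$ and $dL^2$ (using that $dL^k$ is carried by $\{Z^k=0\}$ and the symmetric convention $\mathrm{sgn}(0)=0$) is clean and accurate. So your second display for $D_t$ holds as written.

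The gap is exactly where you locate it: the identification
\[
\sum_{i=1}^{N}\lambda^i_t \;=\; \tfrac12\Lambda_t \;+\;(N-1)\Bigl(\int_0^t 1_{\{Z^1_s\ne0\}}\,dL^2_s+\int_0^t 1_{\{Z^2_s\ne0\}}\,dL^1_s\Bigr).
\]
Your heuristic is correct but is not yet a proof, and the occupation-time argument you allude to will not work directly: on $\{D_s>0,\;Y^i_s=0\}$ one has $Z^1_s,Z^2_s\notin E_i^{\ast}$, hence $d\langle Y^i\rangle_s=0$ a.e.\ there, so the occupation density formula gives no information on $d\lambda^i$ in that region. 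What does work is to split $\sum_i\lambda^i$ according to $\{D=0\}$ and $\{D>0\}$. On $\{D=0\}$ the general identity $\int_0^t 1_{\{D_s=0\}}dA_s=\tfrac12\Lambda_t$ for the finite-variation part $A$ of a nonnegative semimartingale $D$ gives the first term. On $\{D>0\}$, fix $j$ and work on $\{Z^2_s\in E_j^{\ast}\}$: for each $i\ne j$ one has, locally, $Y^i=\overline{Z^1}^i\ge0$, and Tanaka applied to $\overline{Z^1}^i$ itself (not to $Y^i$ via its bracket) yields $L^0(\overline{Z^1}^i)=2L^1$, so each of these $N-1$ indices contributes $dL^1$ to $\sum_i d\lambda^i$; the index $i=j$ contributes nothing since $Y^j<0$ there. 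Summing over the ray of $Z^2$ and adding the symmetric contribution from $dL^2$ gives the $(N-1)$ factor. Making this rigorous requires localising on excursion intervals of $Z^2$ away from $0$ (and symmetrically), which is the work done in \cite{MR1655299}; you should either carry it out or cite that reference for this step rather than invoking the occupation formula.
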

Note that $L^1_t=\frac{1}{N} L_t(|Z^1|)$ and $L^2_t=\frac{1}{N} L_t(|Z^2|)$ by (\ref{Maaa}).

Applying the previous proposition to $(Z^1,Z^2)=(X^r,Y^r)$ and using Proposition \ref{pop} (ii), we get

$$d(\overline{X^r_t},\overline{Y^r_t}) =M^r_t + \frac{1}{2} \Lambda^r_t + \frac{(N-2)}{N} \left(L_t(|X^r|) + L_t(|Y^r|)\right)$$ 
with $M^r$ a martingale and $\Lambda^r$ the local time of $d(\overline{X^r_t},\overline{Y^r_t})$. In particular, 
\begin{equation}\label{yes}
\EE[d(\overline{X^r_t},\overline{Y^r_t})]\ge 2\frac{(N-2)}{N} \EE[R_t]
\end{equation}
with $R$ a reflected Brownian motion started from $0$. 

Proposition \ref{reminder} applied to the Wiener coupling $(Z^1,Z^2)=(X,Y)$ and the result of Section \ref{s} show that
\begin{equation}\label{mp}
d(\overline{X_t},\overline{Y_t}) =M_t + \frac{(N-2)}{N} \left(\int_0^t 1_{\{X_s\ne 0\}} dL_s(|Y|) + \int_0^t 1_{\{Y_s\ne 0\}} dL_s(|X|)\right)
\end{equation} 
with $M$ a martingale. By the Balayage formula (see \cite{MR2200733} on page 111 or the proof of Proposition 8 in \cite{MR1655299}) and the fact that $L_t(D)=0$, 
\begin{equation}\label{opo}
d(\overline{X_t},\overline{Y_t}) = \text{Martingale} + \frac{N-2}{N}\left(1_{\{\overline{X}_{g^2}\ne 0\}}|\overline{Y_t}| + 1_{\{\overline{Y}_{g^1}\ne 0\}}|\overline{X_t}|\right)
\end{equation}
where $g^1:=g^{\overline{X}}_t$ and $g^2:=g^{\overline{Y}}_t$. Admit for a moment that $\EE[d(\overline{X^r_t},\overline{Y^r_t})]$ converges to $\EE[d(\overline{X_t},\overline{Y_t})]$. It comes from (\ref{yes}), (\ref{opo}), $(X,Y)$ has the same law as $(Y,X)$, that
$$2\frac{N-2}{N} \EE\left[1_{\{\overline{X}_{g^2}\ne 0\}}|\overline{Y_t}|\right]\ge 2\frac{(N-2)}{N} \EE[R_t]$$ 
Consequently 
$$\EE\left[|\overline{Y_t}|\right]\ge \EE\left[1_{\{\overline{X}_{g^2}\ne 0\}}|\overline{Y_t}|\right]\ge \EE[R_t]$$ 
Note that this consequence is true only if $N\ge 3$. But $\EE\left[|\overline{Y_t}|\right]= \EE[R_t]$ and so $\overline{X}_{g^2}\ne 0$ a.s. By symmetry $\overline{Y}_{g^2}\ne 0$. Returning back to (\ref{opo}), we deduce that $d(\overline{X}_t,\overline{Y}_t)-\frac{N-2}{N}(|\overline{X}_t|+|\overline{Y}_t|)$ is a martingale which proves Theorem \ref{main} (i).

Note that $g^1=g^X_t$, $g^2=g^Y_t$ and for $Z$ a WBM, the sets of zeros of $Z$ and $\overline{Z}$ are equal. Consequently $X_{g^Y_t}\ne 0$ and $Y_{g^X_t}\ne 0$ a.s. In particular $g^X_t\ne g^Y_t$ a.s and since $\{X_t=Y_t\}\subset\{g^X_t=g^Y_t\}$ (as $X$, $Y$ follow the same Brownian motion on the same ray), Theorem \ref{main} (ii) is also proved.
\begin{remark}
Using the convergence of $\EE[d(\overline{X^r_t},\overline{Y^r_t})]$ to $\EE[d(\overline{X_t},\overline{Y_t})]$, (\ref{yes}) and (\ref{mp}), we easily deduce that 
$$\int_0^t 1_{\{X_s\ne 0\}} dL_s(|Y|) =L_t(|Y|); \int_0^t 1_{\{Y_s\ne 0\}} dL_s(|X|)=L_t(|X|)$$
which is similar to Proposition \ref{pop} (ii). 
\end{remark}
Now it remains to prove the following
\begin{lemma}
We have $$\lim_{r\to 1} \EE[d(\overline{X^r_t},\overline{Y^r_t})] = \EE[d(\overline{X_t},\overline{Y_t})].$$
\end{lemma}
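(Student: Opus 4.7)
The plan is to combine the convergence in law from Lemma \ref{lem} with a uniform integrability argument. First, I would observe that the map $G \to G,\ x\mapsto \overline{x}$ (defined ray by ray by the linear rescaling $e_i(r)\mapsto e_i(r/(Np_i))$ and sending $0$ to $0$) is continuous on $G$: it is affine on each $E_i$, sends $0$ to $0$, and satisfies $|\overline{x}|\le |x|/(N\min_i p_i)$, so continuity at the origin is automatic. Since the geodesic distance $d$ is continuous, the functional $(x,y)\mapsto d(\overline{x},\overline{y})$ is continuous on $G\times G$. By Lemma \ref{lem}, $(X^r_t,Y^r_t)$ converges in law to $(X_t,Y_t)$, and the continuous mapping theorem gives $d(\overline{X^r_t},\overline{Y^r_t})\to d(\overline{X_t},\overline{Y_t})$ in distribution as $r\to 1$.

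Next, I would upgrade convergence in distribution to convergence of expectations by proving uniform integrability. The key observation is the trivial bound
\[
d(\overline{X^r_t},\overline{Y^r_t})\ \le\ |\overline{X^r_t}|+|\overline{Y^r_t}|\ \le\ \frac{1}{N\min_i p_i}\bigl(|X^r_t|+|Y^r_t|\bigr).
\]
Now $(X^r,W^r)$ is a solution of $(I)$ with $X^r_0=0$, so by Theorem 2.3 of \cite{MR2905755}, $X^r$ is a WBM, and hence $|X^r|$ is a reflected Brownian motion started at $0$. In particular the law of $|X^r_t|$ does not depend on $r$, and every moment $\mathbb{E}[|X^r_t|^p]=\mathbb{E}[R_t^p]$ is finite and uniform in $r$. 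The same holds for $|Y^r|$. Consequently $\{d(\overline{X^r_t},\overline{Y^r_t})\}_{r\in[0,1]}$ is bounded in $L^2$, hence uniformly integrable.

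Combining the two steps, convergence in distribution together with uniform integrability yields convergence of expectations:
\[
\lim_{r\to 1}\mathbb{E}\bigl[d(\overline{X^r_t},\overline{Y^r_t})\bigr]=\mathbb{E}\bigl[d(\overline{X_t},\overline{Y_t})\bigr],
\]
which is the claim. There is no real obstacle in this argument; the only point demanding a little care is the continuity of $x\mapsto \overline{x}$ at the origin, which would fail without the $1_{\{X_t\ne 0\}}$ factor in the definition of $\overline{X}$ but is automatic with it because of the Lipschitz bound above. Everything else is standard: the distributional convergence is handed to us by Lemma \ref{lem}, and the $L^p$ bound reduces to the trivial fact that $|X^r_t|$ and $|Y^r_t|$ are distributed as the modulus of a standard Brownian motion at time $t$.
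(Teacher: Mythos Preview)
Your proof is correct and follows essentially the same approach as the paper: both use continuity of $x\mapsto\overline{x}$ together with Lemma \ref{lem} to get convergence in law, and both control integrability via the bound $d(\overline{X^r_t},\overline{Y^r_t})\le C(|X^r_t|+|Y^r_t|)$ with $|X^r_t|,|Y^r_t|$ distributed as reflected Brownian motion. The only cosmetic difference is that the paper passes through Skorokhod's representation theorem and then proves convergence in $L^1$ by hand via Cauchy--Schwarz, whereas you invoke directly the standard fact that convergence in distribution plus uniform integrability implies convergence of expectations; your route is slightly cleaner.
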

\begin{proof} From the convergence in law given in Lemma \ref{lem}, it is easily seen that $(\overline{X^r},\overline{Y^r})$ converges in law to $(\overline{X},\overline{Y})$. This is because $\overline{Z}$ is a continuous function of $Z$. Let $r_n$ be a sequence converging to $1$.
Skorokhod representation theorem says that it is possible to construct on some probability space $(\Omega',\mathcal A',\mathbb P')$, random variables $(X^n,Y^n)_{n\ge 1}$ and $(X^\infty,Y^\infty)$ such that 
for each $n$, $(X^n,Y^n)$ has the same law as $(\overline{X^{r_n}},\overline{Y^{r_n}})$ and $(X^{\infty},Y^\infty)$ has the same law as $(\overline{X},\overline{Y})$ and moreover $(X^n,Y^n)$ converges a.s. to $(X^{\infty},Y^\infty)$.  The lemma holds as soon as we prove $$\lim_{n\to \infty} \EE[d(X^n_t,Y^n_t)] = \EE[d(X^{\infty}_t,Y^\infty_t)].$$ 
For each $\epsilon>0$,
\begin{eqnarray*}
\EE[d(X^n_t,X^\infty_t)]&\le& \epsilon +  \EE[d(X^n_t,X^\infty_t)1_{\{d(X^n_t,X^\infty_t)>\epsilon\}}]\\
&\le& \epsilon + \EE[d(X^n_t,X^\infty_t)^2]^{1/2} \PP[d(X^n_t,X^\infty_t)>\epsilon]^{1/2}\\
&\le& \epsilon + C\times \PP[d(X^n_t,X^\infty_t)>\epsilon]^{1/2}
\end{eqnarray*}
for some finite constant $C$. Thus, $\limsup_n \EE[d(X^n_t,X^\infty_t)]=0$ and similarly \\
$\limsup_n \EE[d(Y^n_t,Y^\infty_t)]=0$. The lemma follows now using the triangle inequality.
\end{proof}

Let us now prove Theorem \ref{main} (iii). 

Denote by $\mathcal G$ the natural filtration of the Wiener coupling $(X,Y)$. For a random time $R$, let us recall the following $\sigma$-fields (see \cite{MR1655299} on page 286)
\begin{eqnarray}
\mathcal G_{R}&=&\sigma(U_R : U \ \text{is a}\ \mathcal G-\ \text{optional process}),\nonumber\\
\mathcal G_{R+}&=&\sigma(U_R : U \ \text{is a}\ \mathcal G-\ \text{progressive process}).\nonumber\
\end{eqnarray}
In the sequel, we will always consider the completions of these sigma-fields by null sets. Let $g^1=g^{X}_t, g^2=g^{Y}_t$. It is known (see for example Proposition 19 in \cite{MR1655299}), that $\varepsilon(X_t)$ is independent of $\mathcal G_{g^1}$ and $\varepsilon(X_t)$ is $\mathcal G_{g^1+}$ measurable (the same holds for $Y$). The event $\{g^1<g^2\}\in \mathcal G_{g^2}$ (see Proposition 13 in \cite{MR1655299}) and on this event, $\varepsilon(X_t)=\limsup_{\epsilon\rightarrow0+} \varepsilon(X_{(g^1+\epsilon)\wedge g^2})$. Since $(g^1+\epsilon)\wedge g^2\le g^2$, by Proposition 13 in \cite{MR1655299} again, $\mathcal G_{(g^1+\epsilon)\wedge g^2}\subset \mathcal G_{g^2}$ and so $\limsup_{\epsilon\rightarrow0+} \varepsilon(X_{(g^1+\epsilon)\wedge g^2})$ is $\mathcal G_{g^2}$-measurable. Take $f$ an indicator function on a subset of $\{1,\cdots,N\}$. By conditioning with respect to $\mathcal G_{g^2}$, we deduce 
$$\EE[f(\varepsilon(X_t)) f(\varepsilon(Y_t))1_{\{g^{1}<g^{2}\}}]=\EE[f(\varepsilon(Y_{t})] \EE[f(\varepsilon(X_t))1_{\{g^{1}<g^{2}\}}]$$
and 
$$\EE[f(\varepsilon(X_t)) f(\varepsilon(Y_t))1_{\{g^{2}<g^{1}\}}]=\EE[f(\varepsilon(X_{t})] \EE[f(\varepsilon(Y_t))1_{\{g^{2}<g^{1}\}}]$$
Summing and using $\mathbb P(g^1=g^2)=0$, we get
$$\EE[f(\varepsilon(X_t)) f(\varepsilon(Y_t))] = \EE[f(\varepsilon(X_{t})] \big(\EE[f(\varepsilon(X_t))1_{\{g^{1}<g^{2}\}}] + \EE[f(\varepsilon(Y_t))1_{\{g^{2}<g^{1}\}}]\big)$$
But $\{g^{1}<g^{2}\}=\{g^{2}<g^{1}\}^c$ a.s. Since $\mathcal G_{g^1}$ is complete, $\{g^{1}<g^{2}\}\in \mathcal G_{g^1}$ which is independent of $\varepsilon(X_t)$ so that 
$$\EE[f(\varepsilon(X_t))1_{\{g^{1}<g^{2}\}}]=\frac{1}{2} \EE[f(\varepsilon(X_t))].$$
Using the symmetry, we arrive at $\EE[f(\varepsilon(X_t)) f(\varepsilon(Y_t))]=\EE[f(\varepsilon(X_t))] \EE[f(\varepsilon(Y_t))]$.\\

\section{Interpretation using stochastic flows}\label{ff}
\leavevmode\par
\vspace{0.2cm}
This section gives an interpretation of the Wiener coupling using the Wiener stochastic flow of kernels solving the generalized interface equation considered in \cite{MR2905755}. For basic definitions of stochastic flows of mappings, kernels and real white noises, the reader is referred to \cite{MR2060298}.

For a family of doubly indexed random variables $Z=(Z_{s,t})_{s\le t}$, define $\mathcal F^Z_{s,t}=\sigma(Z_{u,v}, s\leq u\leq v\leq t)$ for all $s\le t$. The extension to flows of kernels of the interface SDE is the following.
\begin{definition}\label{df}
Let $K$ be a stochastic flow of kernels on $G$ and $\mathcal W=(\mathcal W^i, 1\le i\le N)$ be a family of independent real white noises. We say that $(K,\mathcal W)$ solves $(I)$ if
for all $s\leq t$, $f\in \mathcal D$ and $x\in G$, a.s.
$$K_{s,t}f(x)=f(x)+\sum_{i=1}^{N}\int_s^t
K_{s,u}(1_{E_i}f')(x)d\mathcal W^i_{s,u} + \frac{1}{2}\int_s^tK_{s,u}f''(x)du.$$
We say $K$ is a Wiener solution if for all $s\le t$, $\mathcal F^{K}_{s,t}\subset\mathcal F^{\mathcal W}_{s,t}$.
When $K$ is induced by a stochastic flow of mappings $\varphi$ ($K=\delta_{\varphi}$), we say $(\varphi,\mathcal W)$ is a solution of $(I)$.
\end{definition}
Note that when $K=\delta_{\varphi}$, the flow $\varphi$ defines a system of solutions to the interface SDE (\ref{def}) for all possible time and position initial conditions.

If $(K,\mathcal W)$ solves $(I)$, then $\mathcal F^{\mathcal W}_{s,t}\subset \mathcal F^{K}_{s,t}$ for all $s\le t$ \cite{MR2905755}. Therefore Wiener solutions are characterized by $\mathcal F^{\mathcal W}_{s,t}=\mathcal F^{K}_{s,t}$ for all $s\le t$. 

It has been proved in \cite{MR2905755} that there exists a law unique stochastic flow of mappings $\varphi$ and a real white noise $\mathcal W$ such that $(\varphi,\mathcal W)$ solves $(I)$. Filtering this flow with respect to $\mathcal W$ gives rise to a Wiener stochastic flow of kernels $K_{s,t}(x)=\EE[\delta_{\varphi_{s,t}(x)}|\mathcal F^{\mathcal W}_{s,t}]$ solution of $(I)$ which is unique up to modification.

In the case $N=2$ the Wiener flow and the flow of mappings coincide ($K=\delta_{\varphi}$) while $K\ne \delta_{\varphi}$ if $N\ge 3$ and other flows solving $(I)$ may exist \cite{MR2905755}. 

Let $(K,\mathcal W)$ be the Wiener stochastic flow which solves $(I)$. Then
$$Q_t(f\otimes g\otimes h)(x,y,w)=\EE[K_{0,t}f(x) K_{0,t}g(y)h(w+\mathcal W_{0,t})]$$
defines a Feller semigroup on $G^2\times \mathbb R^N$. Denote by $(X,Y,W)$ the Markov process associated to $(Q_t)_t$ and started from $(x,y,0)$.  
\begin{proposition}\label{hh}
 $(X,Y,W)$ is the Wiener coupling solution of $(I)$ with $X_0=x$ and $Y_0=y$. 
\end{proposition}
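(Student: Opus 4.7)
The Wiener coupling is law-unique, so it suffices to show that $(X,Y,W)$ satisfies the three defining properties: $W$ is an $(\mathcal F_t)$-Brownian motion, both $(X,W)$ and $(Y,W)$ solve $(I)$ on this filtration with $X_0=x$ and $Y_0=y$, and $X,Y$ are conditionally independent given $W$. The plan is to build a second explicit process with the same Feller semigroup $Q_t$ and the same starting point $(x,y,0)$, for which these properties are transparent, and then conclude by equality in law of Markov processes.

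For the construction, I would work on a probability space supporting the solution flow $(\varphi,\mathcal W)$ of $(I)$ and enlarge it to carry a second flow $\tilde\varphi$ drawn from the regular conditional distribution of $\varphi$ given $\mathcal F^{\mathcal W}_{0,\infty}$, conditionally independent of $\varphi$ given this $\sigma$-field. Since the joint law of $(\tilde\varphi,\mathcal W)$ equals that of $(\varphi,\mathcal W)$ by construction, and the property ``$(\,\cdot\,,\mathcal W)$ solves $(I)$ in the flow-of-mappings sense'' depends only on this joint law on the Polish space of flows, the pair $(\tilde\varphi,\mathcal W)$ is again a solution flow. Put $\hat X_t:=\varphi_{0,t}(x)$, $\hat Y_t:=\tilde\varphi_{0,t}(y)$, $\hat W_t:=\mathcal W_{0,t}$.

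To identify $(\hat X,\hat Y,\hat W)$ with the Markov process of the proposition, I would combine $K_{0,t}f(x)=\EE[f(\varphi_{0,t}(x))\mid\mathcal F^{\mathcal W}_{0,t}]$, the analogous identity for $\tilde\varphi$, and the conditional independence of $\varphi,\tilde\varphi$ given $\mathcal F^{\mathcal W}$, to get
\begin{align*}
\EE\!\left[f(\hat X_t)g(\hat Y_t)h(\hat W_t)\right]
&=\EE\!\left[K_{0,t}f(x)\,K_{0,t}g(y)\,h(\mathcal W_{0,t})\right]\\
&=Q_t(f\otimes g\otimes h)(x,y,0).
\end{align*}
Extending via the cocycle property of $\varphi,\tilde\varphi$ and the independent-increment structure of $\mathcal W$ upgrades this to the full Markov property with semigroup $Q_t$; uniqueness of Markov laws then gives $(\hat X,\hat Y,\hat W)\stackrel{d}{=}(X,Y,W)$. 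The three Wiener-coupling properties are now immediate for $(\hat X,\hat Y,\hat W)$ — and hence for $(X,Y,W)$ — since $(\hat X,\hat W)$ and $(\hat Y,\hat W)$ inherit the SDE of Definition \ref{def} from Definition \ref{df} applied at the initial points $x,y$, and $\hat X,\hat Y$ are conditionally independent given $\sigma(\hat W)=\mathcal F^{\mathcal W}_{0,\infty}$ by construction. Checking that $W$ is an $(\mathcal F_t)$-BM is automatic from applying $Q_t$ to the factor $h$ alone.

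The main obstacle is verifying that ``being a solution flow driven by $\mathcal W$'' is encoded in the joint law of $(\varphi,\mathcal W)$ on the Polish space of continuous flows, so that the conditionally-independent copy $\tilde\varphi$ produced from the regular conditional distribution automatically satisfies Definition \ref{df} with the same driving noise. This amounts to rewriting the stochastic-integral identity in Definition \ref{df} as a law-determined functional of $(\varphi,\mathcal W)$ insensitive to the particular reference filtration, which is standard but requires some care. Once this is in place, all remaining steps are routine semigroup manipulations.
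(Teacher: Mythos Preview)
Your approach is correct but takes a different route from the paper. The paper never constructs a second flow $\tilde\varphi$; instead it works directly with the Markov process $(X,Y,W)$ attached to $Q_t$. First it identifies the marginal semigroup $\widetilde Q_t(f\otimes h)(x,w)=\EE[f(\varphi_{0,t}(x))h(w+\mathcal W_{0,t})]$, so $(X,W)$ (and by symmetry $(Y,W)$) has the law of $(\varphi_{0,\cdot}(x),\mathcal W_{0,\cdot})$ and therefore solves $(I)$. For conditional independence, the paper realises $K$ (hence $\mathcal W$) on the same probability space as $(X,Y,W)$ with $W_t=\mathcal W_{0,t}$, and proves by a short induction on the number of time points the identity
\[
\EE\!\left[\prod_{i=1}^n f_i(X_{t_i})g_i(Y_{t_i})h_i(W_{t_i})\right]=\EE\!\left[\prod_{i=1}^n K_{0,t_i}f_i(x)\,K_{0,t_i}g_i(y)\,h_i(W_{t_i})\right],
\]
which simultaneously gives $K_{0,t_i}f_i(x)=\EE[f_i(X_{t_i})\mid\mathcal F^W_{0,t_i}]$ and the product form required for conditional independence.

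What each approach buys: the paper's argument is shorter and entirely semigroup-based, avoiding the regular-conditional-distribution construction of $\tilde\varphi$ and the verification that the joint object $(\varphi,\tilde\varphi,\mathcal W)$ still has independent increments over disjoint time intervals (the point you correctly flag as needing care, and which is what makes your cocycle argument go through). Your route, on the other hand, produces an explicit probabilistic realisation of the Wiener coupling as two one-point motions of conditionally independent copies of $\varphi$ given $\mathcal W$; this is conceptually illuminating and is essentially the Le Jan--Raimond picture of the two-point motion of the Wiener flow $K$. Both are valid; the paper's is the more economical of the two.
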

\begin{proof}
Note that $$\widetilde{Q}_t(f\otimes h)(x,w):=Q_t(f\otimes I\otimes h)(x,w)=\EE[f(\varphi_{0,t}(x)) h(w+\mathcal W_{0,t})]$$
In particular $(X,W)$ has the same law as $(\varphi_{0,t}(x),\mathcal W_{0,t})_{t\ge 0}$ and so it is a solution to $(I)$. The same holds for $(Y,W)$. Now it remains to prove that $X$ and $Y$ are independent given $W$. We will check that
$$\EE\left[\prod_{i=1}^n f_i(X_{t_i}) g_i(Y_{t_i}) h_i(W_{t_i})\right]=\EE\left[\prod_{i=1}^n \EE[f_i(X_{t_i})|W] \EE[g_i(Y_{t_i})|W] h_i(W_{t_i})\right]$$
for all measurable and bounded test functions $(f_i,g_i,h_i)_i$. Since $K$ is a measurable function of $\mathcal W$, we may assume $K$ (and so $\mathcal W$) is defined on the same space as $X$ and $Y$ and that $W_t=\mathcal W_{0,t}$. By an easy induction (see the proof of Proposition 4.1 in \cite{MROP}),
\begin{equation}\label{tyr}
\EE\left[\prod_{i=1}^n f_i(X_{t_i}) g_i(Y_{t_i}) h_i(W_{t_i})\right]=\EE\left[\prod_{i=1}^n K_{0,t_i}f_i(x) K_{0,t_i}g_i(y) h_i(W_{t_i})\right]
\end{equation}
From (\ref{tyr}), we also deduce $K_{0,t_i}f_i(x)=\EE[f_i(X_{t_i})|\mathcal F^W_{0,t_i}]$ and $K_{0,t_i}g_i(y)=\EE[g_i(Y_{t_i})|\mathcal F^W_{0,t_i}]$. This completes the proof. 
\end{proof}
Let $(\mathcal{W}_{s,t})_{s\le t}$ and $(\hat{\mathcal{W}}_{s,t})_{s\le t}$ be two independent real white noises and set $\mathcal{W}^r_{s,t}=r \mathcal{W}_{s,t} + \sqrt{1-r^2} \hat{\mathcal{W}}_{s,t}$. Denote by $K$ and $K^r$ the Wiener flows solutions of $(I)$ respectively driven by $\mathcal{W}$ and $\mathcal{W}^r$ and define
\begin{equation}\label{revi}
Q^r_t(f\otimes g\otimes h)(x,y,w)=\EE[K^r_{0,t}f(x) K_{0,t}g(y)h(w+\mathcal W_{0,t})]
\end{equation}
Then $Q^r$ is a Feller semigroup. Following the proof of Proposition \ref{hh}, one can prove that $(X^r,Y^r,W)$ given in Lemma \ref{lem} is the Markov process associated to $Q^r$ and starting from $(0,0,0)$. In particular this is also a Feller process.

\leavevmode\par
\vspace{0.2cm}


\textbf{Final remarks and open problems.}\\ 
There are interesting open problems related to the interface SDE. Let us mention some of them.
\begin{itemize}
\item What is the conditional law of $|X_t|$ (and more generally of $X_t$) given $W$?
\item What are the couplings which  ``interpolate'' between the coalescing coupling and the Wiener one? 
\item What are the stochastic flows which  ``interpolate'' between the coalescing flow and the Wiener one? (see \cite{MR2905755} for more details).

\end{itemize}
Let us finish with the following remark regarding the first question. Let $W$ be a standard Brownian motion and let $X^1,X^2,\cdots$ be WBMs started from $0$ such that $(X^i,W)$ is solution to $(I)$ with $X^i_0=0$ for all $i$ and $X^1,X^2,\cdots$ are independent given $W$. Then by the law of the large numbers for all $f\in C_0(G)$, a.s $\EE[f(X^1_t)|W]=\lim_n \frac{1}{n} \sum_{i=1}^{n} f(X^i_t)$ (see Section 2.6 in \cite{MR2060298}). 
\\
\\
\textbf{Acknowledgments.}  We thank the reviewer for his/her thorough review and highly appreciate the comments and suggestions which significantly improved two versions of the paper. In particular the reviewer suggested the present construction of the perturbation process instead of a stochastic flows based construction given in the first version which used the semi group $Q^r$ (\ref{revi}).

\bibliographystyle{plain}
\bibliography{Bil4}

\end{document}